\numberwithin{equation}{section}
\theoremstyle{plain}
\newtheorem{theorem}{Theorem}[subsection]
\newtheorem{lemma}[theorem]{Lemma}
\newtheorem{corollary}[theorem]{Corollary}
\newtheorem{proposition}[theorem]{Proposition}
\newtheorem*{pvdb}{Conjecture A}
\theoremstyle{definition}
\newtheorem{definition}[theorem]{Definition}
\newtheorem{example}[theorem]{Example}
\theoremstyle{remark}
\newtheorem{remark}[theorem]{Remark}
\theoremstyle{remark}
\newcommand{\bd}{{\bf d}}
\renewcommand{\mod}{\operatorname{mod}}
\newcommand{\und}{\underline}
\newcommand{\OO}{\mathcal{O}}
\newcommand{\DD}{{\mathcal D}}
\newcommand{\KK}{{\mathcal K}}
\newcommand{\BB}{{\mathcal B}}
\newcommand{\hra}{\hookrightarrow}
\newcommand{\lan}{\langle}
\newcommand{\ran}{\rangle}
\newcommand{\GG}{\mathcal{G}}
\newcommand{\ga}{\gamma}
\newcommand{\forg}{\operatorname{forg}}
\newcommand{\ov}{\overline}
\renewcommand{\AA}{{\mathcal A}}
\newcommand{\FF}{{\mathcal F}}
\newcommand{\TT}{\mathcal{T}}
\newcommand{\XX}{{\mathcal X}}
\newcommand{\YY}{{\mathcal Y}}
\renewcommand{\SS}{{\mathcal S}}
\newcommand{\St}{\operatorname{St}}
\renewcommand{\a}{\alpha}
\renewcommand{\b}{\beta}
\newcommand{\la}{\lambda}
\newcommand{\C}{{\mathbb C}}
\newcommand{\Z}{{\mathbb Z}}
\newcommand{\wt}{\widetilde}
\newcommand{\sub}{\subset}
\newcommand{\Per}{\operatorname{Perf}}
\newcommand{\red}{\operatorname{red}}
\renewcommand{\P}{\mathbb{P}}
\newcommand{\A}{\mathbb{A}}
\DeclareMathOperator{\Hom}{\mathrm{Hom}}
\newcommand{\G}{{\mathbb G}}
\newcommand{\om}{\omega}
\newcommand{\si}{\sigma}
\newcommand{\ot}{\otimes}
\begin{document}

\title[Equivariant Derived Categories of Invariant Divisors]{Semiorthogonal
decompositions of equivariant derived categories of invariant divisors}

\author[B. Lim]{Bronson Lim}
\address{BL: Department of Mathematics \\ University of Utah \\ Salt Lake City,
  UT 84102, USA} 
\email{bcl@uoregon.edu}
\author[A. Polishchuk]{Alexander Polishchuk}
\address{AP: University of Oregon and National Research University Higher School of Economics} 
\email{apolish@uoregon.edu}

\subjclass[2010]{Primary 14F05; Secondary 13J70}
\keywords{Derived Categories, Semiorthogonal Decompositions}

\begin{abstract}

  Given a smooth variety  \(X\) with an action of a finite group \(G\), and a
  semiorthogonal decomposition of the derived category, \(\mathcal{D}([X/G])\), of
  $G$-equivariant coherent sheaves on \(X\) into subcategories equivalent to
  derived categories of smooth varieties, we construct a similar semiorthogonal
  decomposition for a smooth \(G\)-invariant divisor in \(X\) (under certain
  technical assumptions).  Combining this procedure with the semiorthogonal
  decompositions constructed in \cite{polishchuk-vandenberg-equivariant}, we
  construct semiorthogonal decompositions of some equivariant derived categories
  of smooth projective hypersurfaces.

\end{abstract}

\maketitle

\section{Introduction}
\label{sec:intro}

\subsection{Semiorthogonal decompositions for \(\DD([X/G])\)}
\label{ssec:pvdb-conjecture}

Let \(X\) be a smooth quasiprojective variety over an algebraically closed field \(k\) of
characteristic zero. Suppose \(G\) is a finite group acting on \(X\) by
automorphisms. Then there is a decomposition of the Hochschild homology of the
quotient stack $[X/G]$,
\begin{equation}\label{HH-decomposition-eq}
  HH_\ast([X/G])\cong \bigoplus\limits_{\lambda\in G/{\sim}}
  HH_\ast(X_{\lambda})^{C(\lambda)},
\end{equation}
where \(G/{\sim}\) is the set of conjugacy classes of \(G\),
\(C(\lambda)\) is the centralizer of \(\lambda\), \(X_\lambda\subset X\) is the
invariant subvariety of \(\lambda\), see \cite[Lemma
2.1.1]{polishchuk-vandenberg-equivariant}.  In \cite[Theorem
1.1]{tabuada-vandenberg-additive}, the authors show that the decomposition \eqref{HH-decomposition-eq}
has a motivic origin in an appropriate sense, and that a similar decomposition
exists for any additive invariant of dg-categories.  In
\cite{BGLL-cat-measures-17} a
related decomposition of the equivariant zeta function is given. 


In the case when the geometric quotient $X_\lambda/C(\lambda)$ is smooth one can
identify $HH_\ast(X_\lambda)^{C(\lambda)}$ with $HH_\ast(X_\lambda/C(\lambda))$
(see \cite[Proposition 2.1.2]{polishchuk-vandenberg-equivariant}). Thus, it is natural
to ask whether in some cases the above decomposition can be realized at the
level of derived categories of coherent sheaves. 

\begin{pvdb}[\protect{\cite[Conjecture A]{polishchuk-vandenberg-equivariant}}]
  Assume a finite group \(G\) acts effectively on a smooth variety \(X\), and all the
  geometric quotients \(X_\lambda/C(\lambda)\) are smooth for \(\lambda\in G/{\sim}\). Then there is a
  semiorthogonal decomposition of the derived category \(\DD([X/G])\)
  such that the components \(\mathcal{C}_{[\lambda]}\) of this decomposition are in
  bijection with conjugacy classes in \(G\) and \(\mathcal{C}_{[\lambda]}\cong
  \mathcal{D}(X_\lambda/C(\lambda))\).
	\label{conj:main-conj-a}
\end{pvdb}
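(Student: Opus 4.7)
The plan is to construct, for each conjugacy class $[\lambda] \in G/{\sim}$, a candidate embedding functor $\Psi_\lambda\colon \mathcal{D}(X_\lambda/C(\lambda)) \hookrightarrow \mathcal{D}[X/G]$ and then verify that the resulting collection is semiorthogonal and generates, using \eqref{HH-decomposition-eq} throughout as a numerical guide. The natural candidate, modelled on the Bridgeland--King--Reid equivalence, is
\[
\Psi_\lambda(\mathcal{F}) = \Ind_{C(\lambda)}^G\bigl(\iota_{\lambda,*}(q_\lambda^*\mathcal{F}\otimes\mathcal{E}_\lambda)\bigr),
\]
where $q_\lambda\colon [X_\lambda/C(\lambda)]\to X_\lambda/C(\lambda)$ is the projection from the quotient stack to the smooth geometric quotient (a gerbe under the hypotheses), $\iota_\lambda\colon X_\lambda\hookrightarrow X$ is the inclusion of the fixed locus, and $\mathcal{E}_\lambda$ is an equivariant twist whose role is to absorb the eigenvalues of $\lambda$ on the conormal bundle of $X_\lambda$. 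The case $\lambda=e$ is just pullback along the quotient $[X/G]\to X/G$.

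The first step is full faithfulness of each $\Psi_\lambda$. By Luna's \'etale slice theorem, near a closed $C(\lambda)$-orbit in $X_\lambda$ the variety $X$ is \'etale-locally a linear $C(\lambda)$-representation $V$ with $V^\lambda$ corresponding to $X_\lambda$, so the question reduces to an explicit Koszul computation: the twist $\mathcal{E}_\lambda$ is chosen precisely so that the contribution of $\Lambda^\bullet N^\vee_{X_\lambda/X}$ (appearing via adjunction for $\iota_{\lambda,*}$) cancels out the non-invariant characters of $\lambda$, leaving the expected $\Ext$-groups on $X_\lambda/C(\lambda)$.

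The second step is semiorthogonality, which is where the real difficulty lies. Order $G/{\sim}$ compatibly with the dimension of $X_\lambda$ (the identity first). For $[\mu]$ after $[\lambda]$, the vanishing of $\Hom(\Psi_\lambda(\mathcal{F}),\Psi_\mu(\mathcal{G}))$ reduces, via adjunction, projection formula, and base change along $X_\lambda\cap X_\mu$, to the vanishing of a certain equivariant cohomology group on $[X_\lambda\cap X_\mu/(C(\lambda)\cap C(\mu))]$, twisted by the character $\mathcal{E}_\lambda\otimes\mathcal{E}_\mu^\vee$ combined with an exterior power of a normal bundle. For this to vanish on every component, the twists must be calibrated so that the resulting character has no $(C(\lambda)\cap C(\mu))$-invariants wherever the stabilizer of the component strictly contains both $\langle\lambda\rangle$ and $\langle\mu\rangle$. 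Once full faithfulness and semiorthogonality are established, generation follows from \eqref{HH-decomposition-eq} together with smoothness and properness, or alternatively from a devissage of $\mathcal{D}[X/G]$ along the stratification of $X$ by conjugacy class of stabilizer.

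The main obstacle is arranging the twists $\mathcal{E}_\lambda$ coherently across the whole stratification. On a triple intersection $X_\lambda\cap X_\mu\cap X_\nu$, the three pairwise vanishings impose cocycle-like compatibility conditions relating $\mathcal{E}_\lambda,\mathcal{E}_\mu,\mathcal{E}_\nu$, and \emph{a priori} there is no reason a global coherent choice exists for an arbitrary action. This is presumably why the conjecture remains open in general and why the present paper, rather than attacking it head on, pursues an indirect strategy: transferring a known semiorthogonal decomposition on an ambient smooth $G$-variety $Y$ to a $G$-invariant smooth divisor, so that the data $\{\mathcal{E}_\lambda\}$ is inherited in a controlled way. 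A direct global proof would likely require a categorification of equivariant Lefschetz localization that produces these twists canonically.
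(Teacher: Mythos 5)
The statement you were asked to prove is Conjecture~A, which the paper does not prove: it is stated only as motivation, and the paper's actual contributions (Theorems \ref{SOD+thm} and \ref{Sn-inv-hyper-thm}) transfer \emph{known} instances of the stronger condition (MSOD) to invariant divisors rather than attacking the conjecture directly. So there is no proof in the paper to compare against, and your proposal, as you yourself concede in the final paragraph, is not a proof either: the twists $\mathcal{E}_\lambda$ are never constructed, and the cocycle-type compatibility on multiple intersections is exactly where the argument stops. Your diagnosis of that obstruction is accurate and matches the state of the art --- the conjecture is known only in special cases (complex reflection groups of types $A$, $B$, $G_2$, $F_4$, and $G(m,1,n)$, cyclic quotients, curves). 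A further warning sign: by \cite[Theorem D]{BGLL-cat-measures-17} the conjecture \emph{fails} without the effectivity hypothesis, so any correct general argument must use effectivity essentially; your outline nowhere visibly does, which means the full-faithfulness and semiorthogonality steps as sketched would have to break down somewhere for non-effective actions, and you have not identified where.

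Beyond the admitted gap, one step is actually incorrect as stated: you claim generation follows from the Hochschild homology decomposition \eqref{HH-decomposition-eq} ``together with smoothness and properness.'' Additive invariants cannot detect generation --- the right orthogonal to a semiorthogonal collection of admissible subcategories can be a (quasi-)phantom category with vanishing Hochschild homology, so matching $HH_*$ proves nothing about fullness. (Moreover $X$ is only assumed quasiprojective, so properness is not even available.) Generation must be established directly; the d\'evissage along the stratification by stabilizer type that you mention in passing is the viable route, and is essentially what is carried out in the known cases such as \cite[Theorem 6.3.1]{polishchuk-vandenberg-equivariant}.
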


This conjecture was verified in \cite{polishchuk-vandenberg-equivariant} in the
case where \(G\) is a complex reflection group of types \(A, B, G_2, F_4\), and
\( G(m,1,n)\) acting on a vector space \(V\), as well as for some actions on
$C^n$, where $C$ is a smooth curve.  Other global results exist for cyclic
quotients, see \cite[Theorem 4.1]{kuz-perry-17} and \cite[Theorem
3.3.2]{L-sum-potentials}, and for quotients of curves, see
\cite[Theorem 1.2]{P-toric-orbifold}. It is shown in \cite[Theorem D]{BGLL-cat-measures-17}
that the above conjecture fails without the assumption that $G$ acts effectively. 
Note that we do not expect a natural bijection in Conjecture A as one can see in simple
examples with the action of a cyclic group (see Example \ref{cyclic-group-action-ex} below). 

Because of the results mentioned above on the analogs of the decomposition \eqref{HH-decomposition-eq}, 
we refer to a semiorthogonal decomposition as in Conjecture A, as {\it motivic semiorthogonal decomposition}.

In all known cases of Conjecture A, the semiorthogonal decompositions are
\(\DD(X/G)\)-linear, where $X/G$ is the geometric quotient,
i.e., the Fourier-Mukai kernels giving the components of the semiorthogonal decomposition
live on the fibered products over $X/G$. We describe this situation in Definition \ref{SOD+def} below.

Let us set for brevity $\ov{X}=X/G$ (we assume that $X$ and $\ov{X}$ are smooth). 
For each conjugacy class $\la$ in $G$ we pick a representative and denote
by $X_\la\sub X$ the corresponding invariant locus. We set $\ov{X}_\la=X_\la/C(\la)$ (the geometric quotient),
\begin{equation}\label{ov-Z-la-eq}
\ov{Z}_\la=\ov{X}_\la\times_{\ov{X}} X.
\end{equation}
Note that $\ov{Z}_\la$ is equipped with a natural $G$-action induced by the $G$-action on $X$, so we have
a diagram
\begin{diagram}
&&[\ov{Z}_\la/G]\\
&\ldTo{q_\la}&&\rdTo{p_\la}\\
\ov{X}_\la&&&&[X/G]
\end{diagram}
in which $q_\la$ is finite flat (since so is the map $X\to X/G$), while $p_\la$ is finite.

For example, for $\la=1$ we have $X_1=X$, $C(1)=G$, $\ov{X}_1=X/G=\ov{X}$, $\ov{Z}_1=X$, $p_1$ is the identity map,
$q_1:[X/G]\to X/G$ is the natural projection. For $\la\neq 1$ the scheme $\ov{Z}_\la$ is typically nonreduced
(see e.g., Example \ref{double-cover-ex}).

\begin{definition}\label{SOD+def}
Let us say that the action of a finite group $G$ on a smooth quasiprojective variety $X$ 
satisfies condition (MSOD)\footnote{MSOD stands for ``motivic semiorthogonal decomposition"} if
\begin{itemize}
\item all the quotients $\ov{X}_\la=X_\la/C(\la)$ are smooth;
\item there exists a collection of objects 
$K_\la$ in $\DD([\ov{Z}_\la/G])$, such that the corresponding Fourier-Mukai functors
$$\Phi_{K_\la}:\DD(\ov{X}_\la)\to \DD([X/G]): F\mapsto p_{\la *}(K_\la\ot q_\la^*F)$$
are fully faithful (here the functors $p_{\la *}$, $\ot$ and $q_\la^*$ are derived); 
\item the corresponding subcategories give a semiorthogonal decomposition
$$\DD([X/G])=\lan \DD(\ov{X}_{\la_1}),\ldots,\DD(\ov{X}_{\la_r})\ran$$
with respect to some total ordering $\la_1,\ldots,\la_r$ on $G/{\sim}$.
\end{itemize}
\end{definition}


\subsection{Restricting (MSOD) to $G$-invariant divisors and application to $S_n$-invariant hypersurfaces}
\label{ssec:globalizing-PVdB-decompositions}

The main observation we make in this paper is that condition (MSOD) is preserved when
passing to sufficiently generic $G$-invariant divisors. Namely, we assume that the action of $G$ on $X$ is effective
and denote by $X^{fr}\sub X$ the open subset on which $G$ acts freely. 
Similarly, for each $\la$ and every connected component $Y\sub X_\la$, let us denote by $W(Y)$
the quotient of $C(\la)$ that acts effectively on $Y$, and let $Y^{fr}\sub X_\la$ denote the open subset on which
$W(Y)$ acts freely. In the case when $X_\la$ is connected we will write $W_\la:=W(X_\la)$.

We will impose the following assumption on a divisor $H$ in $X$:

\medskip

\noindent
($\ast$) for every $\la$ and every connected component
$Y\sub X_\la$, $H$ does not contain $Y$ and $H\cap Y^{fr}$ is dense in $H\cap Y$ 
(in particular, $H\cap X^{fr}$ is dense in $H$).

\medskip

\begin{theorem}\label{SOD+thm}
Assume that the pair $(X,G)$ satisfies (MSOD), and let $H\sub X$ be a smooth $G$-invariant divisor
satisfying ($\ast$). Then the pair $(H,G)$ satisfies (MSOD).
\end{theorem}

We will deduce this result from Kuznetsov's base change for semiorthogonal decompositions
\cite{kuz:sod-base-change}.

To get applications of this theorem, one should start with some pairs $(X,G)$
for which condition (MSOD) is already known. 
We mostly focus on the case of the $S_n$ action on $\mathbb{A}^n$ (in which case 
the semiorthogonal decomposition of the required type was constructed in \cite{polishchuk-vandenberg-equivariant}), and also
consider pairs of the form $(C_1\times\ldots\times C_n,G_1\times\ldots\times G_n)$, where for each $i$, $G_i$ is a finite 
group acting effectively on a smooth curve \(C_i\).

We combine Theorem \ref{SOD+thm} with two simpler procedures: replacing $X$ by
a $G$-invariant open subset and passing to the quotient by a free action of
$\G_m$. This leads us in the case of \([V/S_n]\), where $V=\mathbb{A}^n$, to the following semiorthogonal decomposition for 
the projective hypersurface given by an $S_n$-invariant homogeneous polynomial $f$. 

Note that in this case the conjugacy classes
in $S_n$ are numbered by partitions $\la$ of $n$. For each $\la$, we have the corresponding linear subspace $V_\la$
of invariants and we denote by $W_\la$ the quotient of $C(\la)$ acting effectively on $V_\la$ (see Sec.\ 
\ref{ssec:sym-an-decomp} for details). There is an induced $\G_m$-action on $\ov{V}_\la:=V_\la/W_\la$,
and $f_\la:=f|_{V_\la}$ descends to a quasihomogeneous polynomial $\ov{f}_\la$ on $\ov{V}_\la$.
We denote by 
$X_{\ov{f}_{\la}}\sub \P\ov{V}_\la$ the corresponding weighted projective hypersurface
    stack defined as the quotient of the affine hypersurface $\ov{f}_\la=0$ with the origin removed, by the action of $\G_m$
    (here $\P\ov{V}_\la$ is the weighted projective space stack $[\ov{V}_\la\setminus\{0\}/\G_m]$).  

\begin{theorem}\label{Sn-inv-hyper-thm} 

    Let $f$ be an $S_n$-invariant homogeneous polynomial on $V=\mathbb{A}^n$,
    such that the corresponding projective hypersurface $X_f =\P H(f)\subset
    \mathbb{P}(V)$ is smooth.  Then there exists a semiorthogonal decomposition
    $$\mathcal{D}([X_f/S_n])=\lan \DD(X_{\ov{f}_{\la_1}}),\ldots,\DD(X_{\ov{f}_{\la_r}})\ran,$$ 
    where $\la_1<\ldots<\la_r$ is a total order on partitions of $n$ refining the dominance order.

\end{theorem}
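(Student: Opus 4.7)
The plan is to combine Theorem \ref{SOD+thm} with the two simpler (MSOD)-preserving procedures alluded to above, applied to the known (MSOD) for $(V,S_n)$ established in \cite{polishchuk-vandenberg-equivariant}. Specifically, I would realize $X_f$ via the three-step chain
\begin{equation*}
(V, S_n)\;\leadsto\;(V\setminus\{0\},S_n)\;\leadsto\;(\{f=0\}\setminus\{0\},S_n)\;\leadsto\;(X_f,S_n),
\end{equation*}
where the first arrow is restriction to the $S_n$-invariant open $V\setminus\{0\}$, the second applies Theorem \ref{SOD+thm} to the smooth $S_n$-invariant divisor cut out by $f$, and the last descends the decomposition along the free scaling action of $\G_m$.

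The first step is routine: restricting the Fourier--Mukai kernels $K_\lambda$ along open immersions converts each piece $\DD(\ov{V}_\lambda)$ into $\DD(\ov{V}_\lambda\setminus\{0\})$ while preserving both full faithfulness and semiorthogonality. For the second step, smoothness of the affine cone $H=\{f=0\}\setminus\{0\}$ is forced by Euler's identity $\sum_i x_i\partial_i f = (\deg f)\cdot f$: any singular point of $\{f=0\}$ would project to a singular point of $X_f$, contradicting the hypothesis. The hypotheses of Theorem \ref{SOD+thm} then reduce, for each partition $\lambda$ of $n$, to (i) $f_\lambda := f|_{V_\lambda}\not\equiv 0$, and (ii) $H\cap Y^{fr}$ is dense in $H\cap Y$, where $Y=V_\lambda\setminus\{0\}$ is the unique connected component to consider. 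Both conditions are immediate when $\dim V_\lambda\leq 1$ (for the trivial partition $V_\lambda=V$ and $f_\lambda=f\neq 0$; in the one-dimensional case $f_\lambda$ is a pure power of a coordinate, whose zero locus on $Y$ is empty, trivializing (ii)); for partitions with $\dim V_\lambda\geq 2$, condition (ii) amounts to no irreducible component of $\{f_\lambda=0\}$ coinciding with a reflection hyperplane of $W_\lambda$ in $V_\lambda$, which one checks using the $S_n$-invariance of $f$ together with the smoothness of $X_f$. The third step uses that the $\G_m$-action by scaling commutes with $S_n$ and with each $W_\lambda$, acts freely on $H$, and yields $X_f$ as the quotient stack; each piece $\DD((\{f_\lambda=0\}\setminus\{0\})/W_\lambda)$ accordingly descends to $\DD[X_{\ov{f}_\lambda}]$.

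The principal obstacle I anticipate is the divisor step: verifying, uniformly across all partitions $\lambda$, the non-containment and density hypotheses of Theorem \ref{SOD+thm}, and in particular handling (or ruling out via the smoothness of $X_f$) the degenerate possibility that $\P V_\lambda\subset X_f$. A secondary but necessary task is to formalize the open-restriction and $\G_m$-descent procedures as operations preserving condition (MSOD) in the sense of Definition \ref{SOD+def}; both should follow by transporting Fourier--Mukai kernels along flat base change, in parallel with the proof of Theorem \ref{SOD+thm}. The total ordering on partitions refining the dominance order is inherited from the initial decomposition of $\DD[V/S_n]$.
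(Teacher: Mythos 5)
Your overall strategy is exactly the paper's: restrict the (MSOD) decomposition of $(V,S_n)$ to $V\setminus\{0\}$ via Corollary \ref{open-res-cor}, apply Theorem \ref{SOD+thm} to the punctured affine cone over $X_f$, and then descend along the scaling $\G_m$-action using Lemma \ref{passing-to-quot-lem}. The gap is in the step you declare immediate. For the partition $\la=(n)$ the fixed locus $V_{(n)}$ is the line spanned by $(1,\ldots,1)$, so $f_{(n)}(t)=f(1,\ldots,1)\,t^{d}$; this is a nonzero pure power of the coordinate only if $f(1,\ldots,1)\neq 0$, and otherwise $f_{(n)}\equiv 0$, in which case $H$ contains the component $Y=V_{(n)}\setminus\{0\}$ and the non-containment hypothesis of Theorem \ref{SOD+thm} fails outright. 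The non-vanishing $f(1,\ldots,1)\neq 0$ does not follow from $f\neq 0$ or from smoothness of the punctured cone; it is the one substantive consequence of the smoothness of $X_f$ that your argument never extracts. The paper proves it by observing that the differential of the symmetrization map $\sigma\colon \P^{n-1}\to\P(1,2,\ldots,n)$ vanishes identically at the $S_n$-fixed point $(1:\cdots:1)$, and $H(f)$ is the $\sigma$-preimage of a hypersurface, so $f(1,\ldots,1)=0$ would force $X_f$ to be singular there; equivalently, Euler's identity together with $S_n$-invariance gives $f_{x_i}(1,\ldots,1)=\tfrac{d}{n}f(1,\ldots,1)$ for every $i$.

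The same inequality is also what resolves the only delicate instance of your condition (ii): for $\la=(n/2,n/2)$ (with $n$ even) one has $\dim V_\la=2$ and the non-free locus of $W_\la$ on $V_\la$ is again the line through $(1,\ldots,1)$, so density of $H(f_\la)\cap V_\la^{fr}$ requires precisely $f(1,\ldots,1)\neq 0$. Once that is established, the rest goes as you sketch: $f(1,\ldots,1)\neq 0$ gives $f_\la\not\equiv 0$ for every $\la$ (since $(1,\ldots,1)\in V_\la$), for $\deg f>1$ and $\dim V_\la\geq 3$ the hypersurface $H(f_\la)$ has an isolated singularity at the origin and is therefore irreducible and not contained in any reflection hyperplane of $W_\la$, and the degree-one case is trivial. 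So the architecture of your proof is sound, but you must supply the argument for $f(1,\ldots,1)\neq 0$ rather than treating the low-dimensional fixed loci as harmless.
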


Note that the decomposition of Theorem \ref{Sn-inv-hyper-thm} no longer follows
the pattern of Conjecture A since some components of the decompositions are
themselves derived categories of stacks. The only similarity is that in both
cases there is a birational morphism of stacks inducing a fully faithful
embedding of derived categories via the pull-back (namely, $[X/G]\to X/G$ in Conjecture A and $[X_f/S_n]\to
X_{\ov{f}_{1^n}}$ in Theorem \ref{Sn-inv-hyper-thm}), which is then extended to a
semiorthogonal decomposition of the derived category of the source stack.

\subsection{Outline of paper}
\label{ssec:outline-of-paper}

In Section \ref{sec:sod-prelims}, after some preliminaries, we review Kuznetsov's theory of base change
for semiorthogonal decompositions. In Section
\ref{sec:formalism}, we prove Theorem \ref{SOD+thm} and discuss the procedure of
inducing the semiorthogonal decomposition on the quotient by an
action of a reductive algebraic group. In Section \ref{sec:pvdb-sym}, we
consider applications of Theorem \ref{SOD+thm}. In particular, in
Section \ref{Sn-hypersurface-sec} we prove Theorem \ref{Sn-inv-hyper-thm}. In
Section \ref{other-ex-sec} we consider applications related to the stacks
$[C_1\times\ldots\times C_n/(G_1\times\ldots\times G_n)]$, where $G_i$ is a
finite abelian group acting on a smooth curve $C_i$.

\subsection{Acknowledgments}

The work of the second author is supported in part by the NSF grant DMS-1700642
and by the Russian Academic Excellence Project `5-100'.  He is grateful to Pavel
Etingof for the suggestion to restrict the semiorthogonal decomposition of
\cite{polishchuk-vandenberg-equivariant} to an $S_n$-invariant hypersurface,
which led to this paper.  He also would like to thank Institut de
Math\'ematiques de Jussieu and Institut des Hautes \'Etudes Scientifiques for
hospitality and excellent working conditions.

Both authors are also grateful to the reviewer for drastically improving the
quality of the paper through their extensive comments. 

\subsection{Conventions}
\label{ssec:conventions}

We work over \(\mathbb{C}\). All varieties are assumed to be quasiprojective (in particular, when a finite
group acts on such a variety, the geometric quotient exists).
All stacks are assumed to be quasiprojective DM-stacks in the sense of \cite[Definition 5.5]{kresch:geometry-dm}.
All functors are assumed to be derived. We denote
by \(\mathcal{D}(X)\) (resp., $\Per(X)$), for \(X\) a variety or a stack, the bounded
derived category of coherent sheaves on $X$ (resp., the subcategory of perfect complexes).  When $G$ is an algebraic group
acting on a variety $X$, we denote by $[X/G]$ the corresponding quotient
stack, whereas $X/G$ denotes the geometric quotient (when it exists). 
We always denote by $\P(a_1,\ldots,a_n)$ the weighted projective space {\it stack} obtained as the quotient stack
$[\A^n\setminus\{0\}/\G_m]$, where $\G_m$ acts with the weights $(a_1,\ldots,a_n)$.

\section{Semiorthogonal decompositions and base change for stacks}
\label{sec:sod-prelims}

In this section, we will prove a version of Kuznetsov's base change for semiorthogonal decompositions 
of derived categories of stacks.

\subsection{Semiorthogonal decompositions}
\label{ssec:semiorthogonal-decompositions}

Recall that a \textit{semiorthogonal decomposition} of a triangulated category
\(\mathcal{T}\) is a pair \(\mathcal{A,B}\) of full triangulated subcategories
of \(\mathcal{T}\) such that \(\mathrm{Hom}_{\mathcal{T}}(\mathcal{B,A}) = 0\),
and every object \(t\in \mathcal{T}\) fits into an exact triangle
\[
	b\to t\to a\to b[1]
\]
where \(a\in\mathcal{A}, b\in\mathcal{B}\). In this case, we write
\(\mathcal{T} = \langle \mathcal{A,B}\rangle\). We can iterate this definition to
get semiorthogonal decompositions with any finite number of components
\(\mathcal{A}_1,\ldots,\mathcal{A}_n\) and we write
\[
	\mathcal{T} = \langle \mathcal{A}_1,\ldots,\mathcal{A}_n\rangle.
\]

For an overview of semiorthogonal
decompositions in algebraic geometry, see
\cite{bondal-orlov-sod,bridgeland-derived}.

\subsection{Fourier-Mukai functors}
\label{ssec:fourier-mukai-functorss}

Recall that following \cite{kresch:geometry-dm}, we call a DM-stack $\XX$
{\it quasiprojective} if it has quasiprojective coarse moduli space and 
is a global quotient of a quasiprojective scheme by a reductive algebraic group. 
For example, the quotient stack $[X/G]$, where $G$ is a finite group
acting on a quasiprojective scheme $X$, satisfies these conditions and its coarse moduli space is $X/G$.
By \cite[Prop.\ 5.1]{kresch:geometry-dm}, such a stack has the resolution property, i.e., every coherent sheaf
on it admits a surjective morphism from a vector bundle. Also, such a stack has an affine diagonal.
We denote by $\DD(\XX)$ the bounded derived category of coherent sheaves on $\XX$ and by
$\Per(\XX)\sub\DD(\XX)$ the perfect derived category.

An object
\(\mathcal{K}\in\mathcal{D}(\XX\times \YY)\), whose support is proper over $\YY$,
gives rise to an exact functor \(\Phi_{\mathcal{K}}\colon \Per(\XX)\to
\mathcal{D}(\YY)\) defined by
\[
  \Phi_{\mathcal{K}}(F) =
  \pi_{\YY\ast}(\pi_\XX^\ast F\otimes \mathcal{K}),
\]
where $\pi_\XX:\XX\times \YY\to \XX$ and $\pi_\YY:\XX\times \YY\to \YY$ are the projections.
We will refer to \(\mathcal{K}\) as a \textit{Fourier-Mukai kernel} and
\(\Phi_{\mathcal{K}}\) a \textit{Fourier-Mukai functor}.  
Note that this functor also has a natural extension $\DD_{qc}(\XX)\to \DD_{qc}(\YY)$
to unbounded derived categories of quasicoherent sheaves, which has the right adjoint
\begin{equation}\label{adjoint-fun-formula}
\Phi_\KK^!(G)=\pi_{\XX\ast}\und{\Hom}(\KK,\pi_\YY^!G),
\end{equation}
where $\pi_\YY^!$ is the right adjoint to $\pi_{\YY\ast}$.

The formalism of Fourier-Mukai functors, e.g., as in \cite{huybrechts-fourier},
extends routinely to the case of smooth DM-stacks (see \cite{bn} where Fourier-Mukai functors
are considered in a much more general context).

Note that in the case when $\XX$ and $\YY$ have maps to some DM-stack $\SS$, 
then it is natural to consider relative Fourier-Mukai functors $\Phi_{\KK}$ associated with
kernels $\KK$ on $\XX\times_\SS \YY$, defined in the same way as above. (One gets the same functor
by considering the usual Fourier-Mukai functor associated with the push-forward of $\KK$ with
respect to the morphism $\XX\times_\SS \YY\to \XX\times \YY$.)
We refer to such Fourier-Mukai functors as {\it $\SS$-linear} since they commute with tensoring by the pull-backs
of objects in $\Per(\SS)$ (as one can easily see from the projection formula).
Note that the right adjoint functor $\Phi_\KK^!$ is also $\SS$-linear (see \cite[Lemma 2.34]{kuz:hyperplane-section}).
Also, under appropriate assumptions, such relative Fourier-Mukai functors are compatible with pull-backs
under a base change (see Proposition \ref{FM-base-change-prop} below).

\subsection{Base change for semiorthogonal decompositions}
\label{ssec:base-change-sod}
Here we will recall the result of \cite{kuz:sod-base-change}, on
the base change for semiorthogonal decompositions. For our purposes, we need a slight generalization
to Deligne-Mumford stacks. Throughout, \(\XX,\XX_i,\SS,\TT\)
will be quasiprojective DM stacks in the sense of \cite{kresch:geometry-dm}. 

The following technical definition plays an important role in the base change.

\begin{definition}
    Suppose we have morphisms of quasiprojective DM stacks \(f\colon \XX\to \SS\) and \(\varphi\colon \TT\to \SS\). Then 
    the cartesian diagram 
    \begin{equation}\label{exact-cartesian-square}
      \begin{tikzcd}
        \XX_\TT\ar{d}{f_\TT} \ar{r}{\varphi_\XX} & \XX\ar{d}{f} \\
        \TT \ar{r}{\varphi} & \SS
      \end{tikzcd} 
    \end{equation}
    is called {\bf exact} 
    if the natural map
    \(\varphi^\ast f_\ast\to (f_\TT)_\ast\varphi_\XX^\ast\) is an isomorphism.
In this case we say that the base change \(\varphi\colon \TT\to \SS\) is
{\bf faithful} for the map \(f\). 
\label{def:faithful-base-change}
\end{definition}

For example, the cartesian diagram is exact if either $f$ or $\varphi$ is flat (this is proved similarly
to \cite[Corollary 2.23]{kuz:hyperplane-section}).

\begin{lemma}\label{product-generation-lemma} 
Assume the square \eqref{exact-cartesian-square} is exact cartesian.
Then the category $\Per(\XX_\TT)$ is classically generated by objects of the form 
\(\varphi_\XX^\ast F\otimes f_\TT^\ast G\) with \(F\in
\Per(\XX)\) and \(G\in \Per(\TT)\). 
\end{lemma}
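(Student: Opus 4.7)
My plan is to extend to the compactly generated derived category $\DD_{qc}(\XX_\TT)$ and verify that the family of external products has vanishing right orthogonal; the lemma will then follow from Neeman's compact generation principle. Since $\XX_\TT$ is itself a quasiprojective DM stack (as a fibered product of such), $\DD_{qc}(\XX_\TT)$ is compactly generated and its compact objects are exactly $\Per(\XX_\TT)$. Writing $\widetilde{\mathcal{C}}\subset \DD_{qc}(\XX_\TT)$ for the smallest coproduct-closed full triangulated subcategory containing all $\varphi_\XX^*F\otimes f_\TT^*G$ with $F\in\Per(\XX)$ and $G\in\Per(\TT)$, Neeman's theorem identifies the thick subcategory of $\Per(\XX_\TT)$ classically generated by these objects with $\widetilde{\mathcal{C}}\cap \Per(\XX_\TT)$. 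It therefore suffices to show $\widetilde{\mathcal{C}}^{\perp}=0$ in $\DD_{qc}(\XX_\TT)$.

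So take $K\in \widetilde{\mathcal{C}}^{\perp}$. Using that $f_\TT^*G$ is perfect, the $\otimes$-Hom adjunction combined with the adjunction $f_\TT^*\dashv f_{\TT*}$ yields
$$0=\RHom_{\XX_\TT}(\varphi_\XX^*F\otimes f_\TT^*G,\,K)\simeq \RHom_\TT\bigl(G,\,f_{\TT*}(\varphi_\XX^*F^\vee\otimes K)\bigr).$$
Letting $G$ vary over a classical generator of $\Per(\TT)$ and using that perfects generate $\DD_{qc}(\TT)$, this forces
$$f_{\TT*}(\varphi_\XX^*F^\vee\otimes K)=0 \quad \text{in } \DD_{qc}(\TT) \text{ for every } F\in\Per(\XX),$$
and the symmetric argument factoring through $\XX$ gives $\varphi_{\XX*}(f_\TT^*G^\vee\otimes K)=0$ in $\DD_{qc}(\XX)$ for every $G\in\Per(\TT)$.

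The main obstacle --- and where the exact cartesian hypothesis enters essentially --- is concluding $K=0$ from these two families of vanishings. My strategy is to descend to an atlas: choose smooth surjections $U\to\XX$ and $V\to\TT$ from quasiprojective schemes, as guaranteed by Kresch's presentation of quasiprojective DM stacks. Then $p\colon U\times_\SS V\to\XX_\TT$ is a faithfully flat cover, the pulled-back square remains exact cartesian, and flat base change translates the pushforward vanishings into analogous statements on $U\times_\SS V$. Localizing further to affine Zariski opens reduces the question to the affine scheme case, where $\OO$ is itself a classical generator of $\Per$ and the desired external generation is trivial (the single external product $\varphi_\XX^*\OO_\XX\otimes f_\TT^*\OO_\TT=\OO_{\XX_\TT}$ restricts to the structure sheaf of each affine chart, so the vanishing of $R\Gamma$ there forces $p^*K$ to vanish locally). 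Flat descent along the atlas then lifts $p^*K=0$ to $K=0$ on $\XX_\TT$, as required.
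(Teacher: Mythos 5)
Your reduction to showing $\widetilde{\mathcal{C}}^{\perp}=0$ in $\DD_{qc}(\XX_\TT)$ via Thomason--Neeman is legitimate, and the derivation of $f_{\TT*}(\varphi_\XX^\ast F^\vee\otimes K)=0$ and $\varphi_{\XX*}(f_\TT^\ast G^\vee\otimes K)=0$ is correct. The gap is in the final step. These vanishings are local on $\TT$ and on $\XX$ respectively, but they are \emph{not} local on $\XX_\TT$: they record that certain pushforwards along $f_\TT$ and $\varphi_\XX$ vanish, i.e.\ information that is global along the fibers of those two maps. In particular the parenthetical ``the vanishing of $R\Gamma$ there forces $p^\ast K$ to vanish locally'' is a non sequitur: $\RHom_{\XX_\TT}(\OO_{\XX_\TT},K)=R\Gamma(\XX_\TT,K)=0$ says nothing about $R\Gamma$ of $K$ on an affine chart of an atlas, and restricting the orthogonality hypothesis to a chart of $\XX_\TT$ is exactly the generation statement you are trying to prove. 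A correct localization has to pair $K$ against objects pushed forward from the atlas --- e.g.\ extend the vanishing from $F\in\Per(\XX)$ to all $M\in\DD_{qc}(\XX)$ by compact generation, take $M=u_\ast F'$ for $u\colon U\to\XX$ the (flat) atlas and $F'\in\Per(U)$, apply flat base change along $u$ and $v\colon V\to\TT$ to land on $W=U\times_\SS V$, and then, over an affine open $V_0\subset V$, use that $U\times_\SS V_0\to U$ is an \emph{affine} morphism (this needs affineness of the diagonal of $\SS$, which holds for Kresch's quasiprojective DM stacks) so that its conservative pushforward lets you conclude $p^\ast K|_{W_0}=0$. None of these ingredients appear in your sketch, and they are not supplied by the phrase ``flat base change translates the pushforward vanishings.''

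A further sign that the argument is not yet under control: you claim the exact-cartesian hypothesis ``enters essentially'' in the descent step, but your sketch never actually invokes it (tor-independence plays no role in the affine observation that $\OO$ generates). In the paper the hypothesis is used at the very start and for a precise purpose: exactness implies tor-independence, hence the honest fiber product $\XX_\TT$ agrees with the derived fiber product $\XX\times^L_\SS\TT$, whereupon the K\"unneth-type equivalence $\Per(\XX)\otimes_{\Per(\SS)}\Per(\TT)\xrightarrow{\sim}\Per(\XX\times^L_\SS\TT)$ of \cite{bn} gives generation by external products in one stroke. Your approach is a genuinely different, more hands-on route that avoids citing that theorem, but the price is that you must carry out the atlas/affine-diagonal argument above in full; as written, the crucial step is missing.
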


\begin{proof}[1st proof] Arguing as in the case of schemes (see \cite[Lemma 5.2]{kuz:sod-base-change}),
we see that it is enough to check that for every coherent sheaf $F$ on $\XX_\TT$ there exists
a surjection $\varphi_\XX^\ast P_\XX\otimes f_\TT^\ast P_\TT\to F$, where $P_\XX$ (resp., $P_\TT$)
is a vector bundle on $\XX$ (resp., $\TT$). Let us consider the natural map
$$\a:\XX_\TT=\TT\times_\SS \XX \to \TT\times \XX.$$
Since it is obtained by the base change from the diagonal map of $\SS$, $\a$ is affine.
Hence, the adjunction map $\a^*\a_*F\to F$ is surjective (since the image of this map under $\a_*$ is surjective).
Since $\a_*F$ is the union of its coherent subsheaves, we can find a coherent sheaf $G$ on $\TT\times \XX$ 
with a surjective map $\a^*G\to F$.
Now using ample line bundles on the coarse moduli spaces of $\TT$ and $\XX$, as well as vector bundles on $\TT$ and
$\XX$ that have faithful action of the stabilizer subgroups at all geometric points, we can find
vector bundles $P_\XX$ and $P_\TT$ on $\XX$ and $\TT$, respectively, and a surjection 
$P_\XX\ot P_\TT\to G$ (see \cite[Sec.\ 5.2]{kresch:geometry-dm}). Thus, we get the composed surjection
$$\varphi_\XX^*P_\XX\ot f_\TT^*P_\TT\simeq \a^*(P_\XX\ot P_\TT)\to \a^*G\to F.$$

\noindent{\it 2nd proof (sketch)}. Here we use a result of Ben-Zvi, Nadler and Preygel \cite{bn} in the context of derived algebraic geometry.
Using \cite[Prop.\ 2.19]{kuz:hyperplane-section} it is easy to see 
that exactness of a cartesian square is equivalent to its tor-independence. Hence, the derived fiber product
\(\XX_{\TT}^h=\XX\times_{\SS}^L\TT\) is equivalent to the fiber product
\(\XX_{\TT}\). Indeed, let \( (\XX_{\TT}^h,\mathcal{A}^\cdot)\) be the derived
fiber product. Then the cohomology sheaves
\(\mathcal{H}^{-\ast}(\mathcal{A}^\cdot)\) are given by
\(\mathrm{Tor}^{-\ast}_{\mathcal{O}_{\SS}}(\mathcal{O}_{\XX},\mathcal{O}_{\TT})\)
which vanishes for \(\ast\neq 0\).

Now the assertion follows from the equivalence \cite[Theorem 1.2]{bn}:
$$  \mathrm{Perf}(\XX)\otimes_{\mathrm{Perf}(\SS)} \mathrm{Perf}(\TT)
  \xrightarrow{\sim} \mathrm{Perf(}\XX_{\TT}^h)
  \xrightarrow{\sim}\mathrm{Perf}(\XX_{\TT}).$$
\end{proof}

Given an $\SS$-linear Fourier-Mukai functor $\Phi_\KK:\Per(\XX)\to \DD(\YY)$
with the kernel $\KK$ on $\XX\times_\SS\YY$ (with proper support
over $\YY$), one can consider a base change $\varphi:\TT\to \SS$, and the corresponding
$\TT$-linear Fourier-Mukai functor 
$$\Phi_{\KK_\TT}:\Per(\XX_\TT)\to \DD(\YY_\TT)$$
given by the kernel $\KK_\TT$ obtained as the pull-back of $\KK$ with
respect to the natural morphism 
$$\XX_\TT\times_{\TT} \YY_\TT\to \XX\times_{\SS} \YY.$$
The natural question is whether the functors $\Phi_{\KK_\TT}$ and $\Phi_\KK$ are compatible with
the pull-back functors induced by $\varphi$. For our purposes the following criterion
will suffice (see \cite[Lemma 2.42]{kuz:hyperplane-section}).

\begin{proposition}\label{FM-base-change-prop} 
In the above situation assume that the map $\YY\to \SS$ is flat and the base change $\varphi:\TT\to\SS$
is faithful for $\XX\to\SS$.
 In addition, assume that $\XX\to \SS$ is proper and
$\varphi$ has finite Tor-dimension.
Then for $F\in \DD^-(\XX)$ and $G\in \DD^+(\YY)$ (where $\DD^-,\DD^+\sub\DD$ denote bounded
above and bounded below derived categories), one has
$$\Phi_{\KK_\TT}\varphi_\XX^\ast(F)\simeq \varphi_\YY^\ast\Phi_\KK(F),$$
$$\Phi_{\KK_\TT}^!\varphi_{\YY}^{\ast}(G)\simeq\varphi_{\XX}^{\ast}\Phi_\KK^!(G).$$
Here $\varphi_\XX:\XX_\TT\to \XX$ and $\varphi_\YY:\YY_\TT\to \YY$ are the natural projections.
\end{proposition}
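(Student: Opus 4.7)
My plan is to reduce both isomorphisms to (relative) base change applied to each leg of the Fourier--Mukai correspondence. Set $\XX' := \XX\times_\SS\YY$ with projections $\pi_\XX,\pi_\YY$, and $\XX'_\TT := \XX_\TT\times_\TT\YY_\TT \simeq \XX'\times_\SS\TT$ with pull-back map $\varphi_{\XX'}:\XX'_\TT\to \XX'$ and projections $\pi_{\XX_\TT},\pi_{\YY_\TT}$; note that by construction $\KK_\TT = \varphi_{\XX'}^*\KK$.

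For the covariant statement, I would first use commutativity of the evident cartesian squares to rewrite
\[
  \KK_\TT\ot \pi_{\XX_\TT}^*\varphi_\XX^* F \simeq \varphi_{\XX'}^*(\KK\ot\pi_\XX^* F).
\]
Applying $\pi_{\YY_\TT *}$, the claim reduces to the base-change isomorphism $\pi_{\YY_\TT *}\varphi_{\XX'}^*\simeq \varphi_\YY^*\pi_{\YY *}$ for the proper morphism $\pi_\YY$ along $\varphi_\YY$. The Tor-independence of $\XX'$ and $\YY_\TT$ over $\YY$ follows by combining the faithful-base-change hypothesis for $\XX\to\SS$ with the flatness of $\YY\to\SS$, while the finite Tor-dimension of $\varphi$ together with the boundedness $F\in\DD^-(\XX)$ keep the complexes in a controllable range.

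For the right-adjoint statement, I would use the formula $\Phi_\KK^!(G) = \pi_{\XX *}\sHom(\KK,\pi_\YY^! G)$, and chain three isomorphisms: (i) upper-shriek base change $\pi_{\YY_\TT}^!\varphi_\YY^*\simeq \varphi_{\XX'}^*\pi_\YY^!$, valid because $\pi_\YY$ is proper, the corresponding square is Tor-independent, and $\varphi_\YY$ has finite Tor-dimension; (ii) compatibility $\sHom(\KK,\varphi_{\XX'}^*(-))\simeq \varphi_{\XX'}^*\sHom(\KK,-)$, which holds because $\KK$ is bounded and $\varphi_{\XX'}$ has finite Tor-dimension; (iii) covariant base change $\pi_{\XX_\TT *}\varphi_{\XX'}^*\simeq \varphi_\XX^*\pi_{\XX *}$ for $\pi_\XX$ along $\varphi_\XX$, justified by the same mechanism as in the covariant step.

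The main obstacle is item (i): lower-star base change amounts to Tor-independence, but upper-shriek base change is genuinely more subtle and is where the finite-Tor-dimension of $\varphi$ becomes indispensable. The whole argument is a careful adaptation of \cite[Lemma 2.42]{kuz:hyperplane-section} to the quasiprojective DM-stack setting; Kresch's hypotheses together with the resolution property ensure that the relevant base-change and Grothendieck-duality results carry over verbatim.
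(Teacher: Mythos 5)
Your proposal is correct and follows essentially the same route as the paper: the paper's own (sketched) proof works on the commutative cube obtained by crossing the cartesian square for $\XX\times_\SS\YY$ with $\varphi:\TT\to\SS$, observes that all faces are exact cartesian, applies base change leg by leg, and invokes the finite Tor-dimension of $\varphi$ precisely to commute $\varphi^*$ past $\und{\Hom}(\KK,-)$ in the adjoint statement. Your write-up just makes the face-by-face base-change steps (including the upper-shriek one for the proper projection) explicit, in the spirit of the cited \cite[Lemma 2.42]{kuz:hyperplane-section}.
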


\begin{proof} This is proved in the same way as \cite[Lemma 2.42]{kuz:hyperplane-section},
by a calculation on the commutative cube obtained as the product over $\SS$
of the cartesian square
\[
      \begin{tikzcd}
        \XX\times_\SS \YY\ar{d}{} \ar{r}{} & \YY\ar{d}{} \\
        \XX \ar{r}{} & \SS
      \end{tikzcd} 
\]
with the arrow $\varphi:\TT\to \SS$. One has to observe that all the faces of this cube are exact cartesian
and use the base change. The assumption that $\varphi$ is of finite Tor-dimension is used to check that $\varphi^*$ commutes
with $\und{\Hom}$.
\end{proof} 

\begin{remark} The condition that $\YY\to \SS$ is flat in Proposition \ref{FM-base-change-prop}
can be replaced by a weaker condition that 
$\varphi:\TT\to\SS$ is faithful for $\YY\to \SS$ and for $\XX\times_\SS \YY\to \SS$, as is
done in  \cite[Lemma 2.42]{kuz:hyperplane-section}.
Note that there is a slight mistake in the proof of \cite[Theorem 6.4]{kuz:sod-base-change} where the above
faithfulness assumption (as well as the assumption of smoothness of $\SS$) is omitted.
\end{remark}

The following result is similar to (but more special than) \cite[Thm.\ 6.4]{kuz:sod-base-change}.

\begin{lemma}\label{full-faithful-lem} 
Assume that $\XX$, $\YY$ and $\SS$ are smooth, $\SS$ is separated, the morphisms $\XX\to \SS$ and $\YY\to \SS$ are proper, and
the morphism $\YY\to \SS$ is flat.
Let $\Phi_\KK:\DD(\XX)\to \DD(\YY)$ be the $\SS$-linear Fourier-Mukai functor associated with
a kernel $\KK$ in $\DD(\XX\times_\SS \YY)$,
and let $\varphi:\TT\to\SS$ be a faithful base change for both $\XX$ and $\YY$. Assume that 
the support of $\KK$ is proper over $\XX$ and that $\Phi_\KK$ is
fully faithful. Then $\Phi_{\KK_\TT}:\Per(\XX_\TT)\to \DD(\YY_\TT)$ is also fully faithful.
\end{lemma}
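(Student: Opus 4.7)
The plan is to prove full faithfulness by showing that the unit of adjunction $\eta:\Id\to \Phi_{\KK_\TT}^!\,\Phi_{\KK_\TT}$ is an isomorphism on all of $\Per(\XX_\TT)$, then conclude by the standard criterion for full faithfulness in terms of adjoints. Since the collection of objects on which $\eta$ is an isomorphism forms a thick triangulated subcategory of $\Per(\XX_\TT)$ (being closed under shifts, cones of morphisms between iso-objects, and direct summands), by Lemma \ref{product-generation-lemma} it suffices to verify that $\eta$ is an isomorphism on objects of the form $\varphi_\XX^\ast F\otimes f_\TT^\ast G$ with $F\in\Per(\XX)$ and $G\in\Per(\TT)$.

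Next I would use $\TT$-linearity. Both $\Phi_{\KK_\TT}$ and its right adjoint $\Phi_{\KK_\TT}^!$ are $\TT$-linear (they are relative Fourier-Mukai functors over $\TT$), so they commute with $(-)\otimes f_\TT^\ast G$. The unit $\eta$ is itself compatible with this tensoring. Consequently, checking that $\eta$ is an isomorphism on $\varphi_\XX^\ast F\otimes f_\TT^\ast G$ reduces to checking it on objects of the form $\varphi_\XX^\ast F$ for $F\in\Per(\XX)$.

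At this point I would invoke Proposition \ref{FM-base-change-prop} twice. The hypotheses needed there (properness of $\XX\to\SS$, flatness of $\YY\to\SS$, faithfulness of $\varphi$ for both $\XX$ and $\YY$, finite Tor-dimension of $\varphi$) are guaranteed by the assumptions of the lemma together with the smoothness of $\SS$. The proposition gives natural isomorphisms
\[
\Phi_{\KK_\TT}(\varphi_\XX^\ast F)\simeq \varphi_\YY^\ast\Phi_\KK(F),\qquad
\Phi_{\KK_\TT}^!(\varphi_\YY^\ast H)\simeq \varphi_\XX^\ast\Phi_\KK^!(H).
\]
Combining them yields
\[
\Phi_{\KK_\TT}^!\,\Phi_{\KK_\TT}(\varphi_\XX^\ast F)\simeq \varphi_\XX^\ast\bigl(\Phi_\KK^!\Phi_\KK(F)\bigr)\simeq \varphi_\XX^\ast F,
\]
where the last isomorphism uses full faithfulness of $\Phi_\KK$ on $\DD(\XX)$.

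The main obstacle I anticipate is not the formal chain of isomorphisms but verifying that the composed isomorphism actually realizes the unit map $\eta_{\varphi_\XX^\ast F}$ itself, rather than merely some abstract isomorphism between source and target. This requires unwinding how the base-change isomorphisms of Proposition \ref{FM-base-change-prop} interact with the unit of the adjunction $(\Phi_{\KK_\TT},\Phi_{\KK_\TT}^!)$; concretely one must identify $\eta_{\varphi_\XX^\ast F}$ as $\varphi_\XX^\ast$ applied to $\eta_F$ under the natural identifications, which in turn follows from the compatibility of base change with adjunction units and can be traced through the commutative cube used in the proof of Proposition \ref{FM-base-change-prop}. Once that compatibility is recorded, the conclusion is immediate.
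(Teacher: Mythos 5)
Your proposal is correct and follows essentially the same route as the paper's own proof: reduce via Lemma \ref{product-generation-lemma} to generators $\varphi_\XX^\ast F\otimes f_\TT^\ast G$, use $\TT$-linearity to strip off $f_\TT^\ast G$, and apply Proposition \ref{FM-base-change-prop} twice to identify the unit on $\varphi_\XX^\ast F$ with $\varphi_\XX^\ast$ of the unit on $F$. The only step the paper records that you omit is the preliminary check that $\Phi_\KK^!$ sends $\DD(\YY)$ into $\DD(\XX)$ (via pushing the kernel forward to the smooth $\XX\times\YY$), which is needed so that the unit morphism lives in the bounded derived category; this is a minor addition, and your flagged concern about the composed isomorphism realizing the actual unit map is the same compatibility the paper implicitly invokes.
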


\begin{proof} First, we claim that the functor $\Phi_{\KK}^!$ sends $\DD(\YY)$ to $\DD(\XX)$.
To this end we observe that $\Phi_\KK$ can be computed as the absolute Fourier-Mukai functor $\Phi_{\KK'}$,
where the kernel $\KK'$ is given by the push-forward of $\KK$ with respect to the finite
morphism $\XX\times_\SS \YY\to \XX\times \YY$ (finiteness of this morphism follows from the
finiteness of the diagonal morphism for $\SS$). Since $\XX\times \YY$ is smooth, the right adjoint functor
$\Phi^!_{\KK'}$ sends $\DD(\YY)$ to $\DD(\XX)$ (as follows from formula \eqref{adjoint-fun-formula}).

Thus, the fact that $\Phi_\KK$ is fully faithful on $\DD(\XX)$ implies that the natural morphism
\begin{equation}\label{Phi-K-adj-can-morphism}
F\to \Phi_\KK^!\Phi_\KK(F)
\end{equation}
is an isomorphism for $F\in\DD(\XX)$.

Now to check that $\Phi_{\KK_\TT}$ is fully faithful on $\Per(\XX_\TT)$,
by Lemma \ref{product-generation-lemma}, it is enough to check that the morphism
$$\wt{F}\to \Phi_{\KK_\TT}^!\Phi_{\KK_\TT}(\wt{F})$$ 
is an isomorphism for objects of the form
\(\wt{F}=\varphi_\XX^\ast F\otimes f_\TT^\ast G\), where \(F\in
\Per(\XX)\) and \(G\in \Per(\TT)\).
But this easily follows from $\TT$-linearity of our functors and from Proposition \ref{FM-base-change-prop} (note that
$\varphi$ has finite Tor-dimension since $\SS$ is smooth):
$$\Phi_{\KK_\TT}^!\Phi_{\KK_\TT}(\varphi_\XX^\ast F\otimes f_\TT^\ast G)\simeq
\varphi_\XX^\ast\Phi_{\KK}^!\Phi_{\KK}(F)\otimes f_\TT^\ast G,$$
so the needed assertion follows from the fact that \eqref{Phi-K-adj-can-morphism} is an isomorphism.
\end{proof}

Suppose we have an \(\SS\)-linear semiorthogonal decomposition 
\[
\Per(\XX) = \langle \AA_1,\ldots,\AA_m\rangle.
\]
Let us define subcategories \(\AA_{i\TT}\subset \Per(\XX_\TT)\) by the formula
\[
\AA_{iT} = \langle \varphi_\XX^\ast A\otimes f_\TT^\ast G\rangle_{A\in\AA_i, G\in \Per(\TT)}.
\]
The following two theorems are analogs of \cite[Thm.\ 5.6]{kuz:sod-base-change} and \cite[Thm.\ 6.4]{kuz:sod-base-change}.

\begin{theorem}\label{general-bc-thm}
    Suppose \(\varphi\colon \TT\to \SS\) is faithful
    for \(f\colon \XX\to \SS\). Assume that there is an
    \(\SS\)-linear semiorthogonal decomposition 
    \[
      \Per(\XX) = \langle \AA_1,\ldots,\AA_m\rangle.
    \]
    Then the subcategories $\AA_{iT}$ form a \(\TT\)-linear semiorthogonal decomposition of \(\Per(\XX_{\TT})\),
    \[
        \Per(\XX_{\TT}) = \langle \AA_{1\TT},\ldots,\AA_{m\TT}\rangle.
    \]
\end{theorem}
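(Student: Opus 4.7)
The plan is to verify directly the two defining properties of a semiorthogonal decomposition for the sequence $(\AA_{1\TT},\ldots,\AA_{m\TT})$, namely semi-orthogonality and generation. The $\TT$-linearity is built into the definition of $\AA_{i\TT}$ as the triangulated subcategory generated by the tensor products $\varphi_\XX^\ast A_i\otimes f_\TT^\ast G$.

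For semi-orthogonality, I reduce to checking Hom-vanishing on generators: for $j>i$, $A_j\in\AA_j$, $A_i\in\AA_i$, and $G_j,G_i\in\Per(\TT)$, I want
$$\RHom_{\XX_\TT}(\varphi_\XX^\ast A_j\otimes f_\TT^\ast G_j,\ \varphi_\XX^\ast A_i\otimes f_\TT^\ast G_i)=0.$$
Setting $H=G_i\otimes G_j^\vee$ (perfect, hence dualizable), tensor-hom adjunction rewrites this as $\RHom(\varphi_\XX^\ast A_j,\varphi_\XX^\ast A_i\otimes f_\TT^\ast H)$. Applying the adjunction $(\varphi_\XX^\ast,\varphi_{\XX\ast})$ together with the projection formula produces
$$\RHom_\XX(A_j,\ A_i\otimes \varphi_{\XX\ast} f_\TT^\ast H).$$
The faithfulness of the base change gives Tor-independence in the other direction as well, so $\varphi_{\XX\ast}f_\TT^\ast H\simeq f^\ast\varphi_\ast H$. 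The expression becomes $\RHom_\XX(A_j,\,A_i\otimes f^\ast\varphi_\ast H)$, which vanishes by the $\SS$-linearity of the original SOD combined with semi-orthogonality $\RHom(\AA_j,\AA_i)=0$.

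For generation, I invoke Lemma \ref{product-generation-lemma}: $\Per(\XX_\TT)$ is classically generated by objects of the form $\varphi_\XX^\ast F\otimes f_\TT^\ast G$. Applying the original SOD to $F$ produces a Postnikov tower whose successive cones are the projections $\alpha_i(F)\in\AA_i$. Since $\varphi_\XX^\ast(-)\otimes f_\TT^\ast G$ is an exact functor, it sends this tower to a filtration of $\varphi_\XX^\ast F\otimes f_\TT^\ast G$ whose successive cones $\varphi_\XX^\ast\alpha_i(F)\otimes f_\TT^\ast G$ lie in $\AA_{i\TT}$ by construction. Hence the triangulated subcategory generated by the $\AA_{i\TT}$ contains a classical generating set and therefore equals $\Per(\XX_\TT)$.

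The main obstacle is the $\SS$-linearity manipulation in the semi-orthogonality step: although $H$ is perfect on $\TT$, the pushforward $\varphi_\ast H$ need not be perfect on $\SS$, so the conclusion $A_i\otimes f^\ast\varphi_\ast H\in\AA_i$ must be interpreted in the ind-extension (or quasicoherent extension) of the original $\SS$-linear decomposition. This is precisely the technical issue handled in \cite{kuz:sod-base-change} via the splitting-functor formalism; in our setting it can be bypassed by approximating $\varphi_\ast H$ by a directed system of perfect complexes on $\SS$, or by replacing $A_j$ with its right orthogonal projection in $\DD_{qc}$ and invoking Proposition \ref{FM-base-change-prop} to track the base change of the projection kernels. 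An alternative, more conceptual route is to realize the projections $\rho_i\colon\Per(\XX)\to\AA_i\subset\Per(\XX)$ as $\SS$-linear Fourier–Mukai endofunctors with kernels $P_i$ on $\XX\times_\SS\XX$, base-change them to kernels $P_{i\TT}$ on $\XX_\TT\times_\TT\XX_\TT$, and transport the identities $P_i\star P_i\simeq P_i$, $P_j\star P_i=0$ ($j\ne i$), and the Postnikov resolution of the diagonal, using repeated application of faithful base change for convolution of kernels.
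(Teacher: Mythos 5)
Your proposal is correct and follows essentially the same route as the paper: semiorthogonality is deduced from the exactness of the base-change square (the paper defers this computation to \cite{kuz:sod-base-change}, while you spell it out via adjunction, the projection formula, and base change in the other direction), and generation comes from Lemma \ref{product-generation-lemma} exactly as in the paper. Your flagged subtlety --- that $\varphi_\ast H$ need not be perfect, so $A_i\otimes f^\ast\varphi_\ast H$ must be handled in $\DD_{qc}$ --- is real but is resolved by the standard argument you indicate (write $\varphi_\ast H$ as a filtered colimit of perfect complexes and use compactness of $A_j$), so no gap remains.
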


\begin{proof}
As in \cite{kuz:sod-base-change}, the semiorthogonality \(\langle
\AA_{i\TT},\AA_{j\TT}\rangle\) for \(i>j\) follows from faithful base change. 
Now Lemma \ref{product-generation-lemma} implies that the subcategories
$\AA_{1\TT},\ldots,\AA_{m\TT}$ generate $\Per(\XX_{\TT})$, and the assertion follows.
\end{proof}

\begin{theorem}
Suppose $\XX$ and $\SS$ are smooth, $\SS$ is separated, the morphism \(f\colon \XX\to \SS\) is flat and proper, and there is an
  \(\SS\)-linear semiorthogonal decomposition
  \[
    \DD(\XX) = \langle \DD(\XX_1),\ldots,\DD(\XX_m)\rangle,
  \]
  where for \(i=1,\ldots,m\), the stacks
  $\XX_i$ are smooth, the maps \(\XX_i\rTo{f_i}\SS\) are proper, and
  the embedding functors
  $\Phi_i:\DD(\XX_i)\to \DD(\XX)$
  are given by some kernels $\KK_i$ in $\DD(\XX_i\times_{\SS} \XX)$.
  Assume now that 
  \(\varphi\colon \TT\to \SS\) is a base change, faithful for $f$ and for each $f_i$.
   Set \(\XX_{i\TT} =\XX_i\times_\SS\TT\). Then the pullbacks \(\KK_{i\TT}\) of $\KK_i$ to
  \(\XX_{i\TT}\times_\TT \XX_\TT\) define fully faithful functors
  \[
    \Phi_{i\TT}\colon \Per(\XX_{i\TT})\to \Per(\XX_\TT).
  \]
and their images give a \(\TT\)-linear semiorthogonal decompositon
  \[
    \Per(\XX_{\TT}) = \langle \Per(\XX_{1\TT}),\cdots,\Per(\XX_{m\TT})\rangle.
  \]
  \label{thm:kernel-compatibility}
\end{theorem}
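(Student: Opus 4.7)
The plan is to assemble the statement from Lemma \ref{full-faithful-lem}, which upgrades each individual Fourier--Mukai embedding to a fully faithful functor after base change, and Theorem \ref{general-bc-thm}, which produces the base-changed semiorthogonal decomposition, with the compatibility of Proposition \ref{FM-base-change-prop} used to match up the pieces coming from the two constructions. So there is really nothing to invent: one just needs to check that the hypotheses line up and that the two descriptions of the $i$-th piece agree.

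First, for each $i$ I would invoke Lemma \ref{full-faithful-lem} for the $\SS$-linear Fourier--Mukai functor $\Phi_i:\DD(\XX_i)\to\DD(\XX)$ with kernel $\KK_i$. All its hypotheses are in place: the stacks $\XX_i$, $\XX$, $\SS$ are smooth; $\XX_i\to\SS$ is proper and $\XX\to\SS$ is flat; $\Phi_i$ is fully faithful as a component of the given SOD; and $\varphi$ is faithful for both $f$ and $f_i$ by hypothesis. The lemma gives full faithfulness of
\[
\Phi_i^\TT:\Per(\XX_i^\TT)\longrightarrow\Per(\XX_\TT).
\]
Next, I would apply Theorem \ref{general-bc-thm} to the original SOD $\Per(\XX)=\lan\AA_1,\ldots,\AA_m\ran$ with $\AA_i:=\Im(\Phi_i)$. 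This outputs a $\TT$-linear SOD $\Per(\XX_\TT)=\lan\AA_{1\TT},\ldots,\AA_{m\TT}\ran$, where $\AA_{i\TT}$ is the thick subcategory generated by $\varphi_\XX^*A\ot f_\TT^*G$ for $A\in\AA_i$ and $G\in\Per(\TT)$. The last step is then to identify $\AA_{i\TT}=\Im(\Phi_i^\TT)$. For one inclusion, Proposition \ref{FM-base-change-prop} applied to $\Phi_i$ (with the roles of $\XX,\YY$ played by $\XX_i,\XX$) yields $\varphi_\XX^*\Phi_i(F)\simeq\Phi_i^\TT(\varphi_{\XX_i}^*F)$ for $F\in\Per(\XX_i)$, and $\TT$-linearity of $\Phi_i^\TT$ (immediate from the fact that the kernel $\KK_i^\TT$ lives on $\XX_i^\TT\times_\TT\XX_\TT$) then gives
\[
\varphi_\XX^*\Phi_i(F)\ot f_\TT^*G\simeq\Phi_i^\TT\bigl(\varphi_{\XX_i}^*F\ot (f_i^\TT)^*G\bigr)\in\Im(\Phi_i^\TT),
\]
so $\AA_{i\TT}\subset\Im(\Phi_i^\TT)$. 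Conversely, by Lemma \ref{product-generation-lemma} the category $\Per(\XX_i^\TT)$ is classically generated by objects $\varphi_{\XX_i}^*F\ot (f_i^\TT)^*G$, and the same compatibility sends these back into $\AA_{i\TT}$; hence $\Im(\Phi_i^\TT)\subset\AA_{i\TT}$. Substituting into the SOD from Theorem \ref{general-bc-thm} yields $\Per(\XX_\TT)=\lan\Per(\XX_1^\TT),\ldots,\Per(\XX_m^\TT)\ran$.

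The one technical point I expect to need care is checking the hypotheses of Proposition \ref{FM-base-change-prop} in each of its invocations, most notably the finite Tor-dimension of $\varphi$; this follows from the smoothness of $\SS$ together with the resolution property of quasiprojective DM stacks. All remaining verifications (exactness of the relevant cartesian squares, tor-independence implicit in ``faithful base change'') are provided by the assumption that $\varphi$ is faithful for both $f$ and each $f_i$. Beyond this the proof is a bookkeeping exercise with the tools already built in Section \ref{sec:sod-prelims}.
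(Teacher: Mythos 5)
Your proposal is correct and follows essentially the same route as the paper's own proof: apply Theorem \ref{general-bc-thm} to get the decomposition into the subcategories $\AA_{i\TT}$, use Proposition \ref{FM-base-change-prop} together with $\TT$-linearity to identify $\AA_{i\TT}$ with the image of $\Phi_i^\TT$ (the paper invokes \eqref{Perf-tensor-product-eq} directly where you cite Lemma \ref{product-generation-lemma}, which is the same generation statement), and conclude full faithfulness from Lemma \ref{full-faithful-lem}. Your added remark that the finite Tor-dimension of $\varphi$ follows from the smoothness of $\SS$ is a correct and slightly more careful justification of a hypothesis the paper leaves implicit.
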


\begin{proof}
Let us set \(\AA_i=\Phi_i\Per(\XX_i)\).   By Theorem \ref{general-bc-thm}, we get
a semiorthogonal decomposition of $\Per(\XX_\TT)$ into the subcategories $\AA_{i\TT}$
(note that $\DD(\XX)=\Per(\XX)$ and $\DD(\XX_i)=\Per(\XX_i)$ by smoothness).

Now we observe that for $F\in \Per(\XX_i)$ and $G\in \Per(\TT)$, we have an isomorphism
$$\varphi_{\XX}^\ast(\Phi_i F)\ot f_\TT^\ast(G)\simeq \Phi_{i\TT}(\varphi_{\XX_i}^\ast F)\ot f_\TT^\ast(G)\simeq
\Phi_{i\TT}(\varphi_{\XX_i}^\ast F\ot f_{i\TT}^\ast(G)),$$
where we used commutation of relative Fourier-Mukai functors with the pull-back 
(see Proposition \ref{FM-base-change-prop})
and $\TT$-linearity of
$\Phi_{i\TT}$ (and $(f_{i\TT}, \varphi_{\XX_i})$ have the same meaning for $\XX_i$ as $(f,\varphi)$ for $\XX$).
Using Lemma \ref{product-generation-lemma} for $\XX$ and $\XX_i$, we deduce that
the image of \(\Per(X_{i\TT})\) under  \(\Phi_{i\TT}\) is exactly \(\AA_{i\TT}\sub\Per(\XX_\TT)\).
Finally, by Lemma \ref{full-faithful-lem}, the functors  \(\Phi_{i\TT}\) are fully faithful.   
\end{proof}

An easy example of the faithful base change is restricting to an open subset. In particular, we deduce
that condition (MSOD) is preserved when passing to a $G$-invariant open subset.

\begin{corollary}\label{open-res-cor} 
Assume that the pair $(X,G)$ satisfies (MSOD), and let $U\sub X$ be a $G$-invariant open subset.
Then the pair $(U,G)$ also satisfies (MSOD), and the corresponding kernels on 
$\ov{U}_\la\times_{\ov{U}} U$ are obtained as pull-backs of the kernels on $\ov{X}_\la\times_{\ov{X}}X$.
\end{corollary}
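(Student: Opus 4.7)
The plan is to apply Theorem \ref{thm:kernel-compatibility} directly to the open immersion $\ov{U}\hra \ov{X}$ as the base change. Set $\SS=\ov{X}$, let $\XX=[X/G]$ with structure map $f\colon[X/G]\to\ov{X}$ the coarse moduli morphism, and let $\XX_i=\ov{X}_{\la_i}$ with $f_i\colon\ov{X}_{\la_i}\to\ov{X}$ the natural finite morphism. Take $\TT=\ov{U}$ with $\varphi:\ov{U}\hra\ov{X}$. The hypothesis (MSOD) for $(X,G)$ produces the kernels $\KK_i=K_{\la_i}$ on $[\ov{Z}_{\la_i}/G]=\ov{X}_{\la_i}\times_{\ov{X}}[X/G]$ giving the required $\ov{X}$-linear semiorthogonal decomposition.

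Next I would verify the hypotheses of Theorem \ref{thm:kernel-compatibility}. Smoothness of $\XX$, $\XX_i$ and $\SS$ is part of (MSOD). The morphisms $f_i$ are finite (being base changes of the finite map $X\to\ov{X}$ along $\ov{X}_{\la_i}\to\ov{X}$, composed with the quotient), hence proper. For flatness of $f$: the map $X\to\ov{X}$ is a finite morphism of smooth varieties, hence flat by miracle flatness; since $X\to[X/G]$ is a $G$-torsor (hence faithfully flat), flatness of $[X/G]\to\ov{X}$ follows by descent. Finally, $\varphi$ is an open immersion, hence flat, hence faithful for $f$ and for each $f_i$, and it has finite Tor-dimension trivially.

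The remaining point is to identify the base-changed stacks and kernels. Since geometric quotients commute with restriction to invariant opens, we have $\ov{X}_{\la_i}\times_{\ov{X}}\ov{U}=U_{\la_i}/C(\la_i)=\ov{U}_{\la_i}$, and then
$$\ov{U}_{\la_i}\times_{\ov{U}}[U/G]=\bigl(\ov{X}_{\la_i}\times_{\ov{X}}[X/G]\bigr)\times_{\ov{X}}\ov{U}=[\ov{Z}_{\la_i}/G]\times_{\ov{X}}\ov{U}=[\ov{Z}^U_{\la_i}/G].$$
Thus the kernels $K^U_{\la_i}$ produced by Theorem \ref{thm:kernel-compatibility} are precisely the pullbacks of the $K_{\la_i}$ along these open immersions, and the theorem yields an $\ov{U}$-linear semiorthogonal decomposition
$$\DD[U/G]=\lan \DD(\ov{U}_{\la_1}),\ldots,\DD(\ov{U}_{\la_r})\ran$$
with fully faithful embeddings induced by the $K^U_{\la_i}$. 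Finally, each $\ov{U}_{\la}$ is smooth as an open subset of the smooth stack $\ov{X}_{\la}$, so the first clause of (MSOD) holds for $(U,G)$.

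There is no real obstacle: every hypothesis of Theorem \ref{thm:kernel-compatibility} is either built into (MSOD) or follows from the flatness of open immersions. The only slightly nontrivial point is the flatness of the coarse moduli map $[X/G]\to\ov{X}$, handled by miracle flatness together with faithfully flat descent along the $G$-torsor $X\to[X/G]$.
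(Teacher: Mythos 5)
Your proposal is correct and follows essentially the same route as the paper: apply Theorem \ref{thm:kernel-compatibility} to the faithful base change $\ov{U}\hra\ov{X}$ and identify $\ov{U}_\la$ and $\ov{U}_\la\times_{\ov{U}}U$ with the corresponding fiber products over $\ov{U}$, so that the new kernels are pullbacks of the old ones. The paper's proof is terser (it does not spell out the flatness and properness checks you include), but the argument is the same.
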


\begin{proof}
To deduce this from Theorem \ref{thm:kernel-compatibility}, we observe that $U$ is the preimage of the open subset
$\ov{U}=U/G\sub \ov{X}$, and $\ov{U}_\la\sub \ov{X}_\la$ is the preimage of $\ov{U}$ under the map 
$\ov{X}_\la\to \ov{X}$. Note that the natural map of stacks over $[X/G]$,
$$[U/G]\to [X/G]\times_X U\simeq [X/G]\times_{\ov{X}}\ov{U},$$
is an equivalence, since it becomes an isomorphism after the base change $X\to [X/G]$.

Thus, we can apply Theorem \ref{thm:kernel-compatibility} to the faithful base change $\ov{U}\to \ov{X}$.
Furthermore, we have
$$(\ov{X}_\la\times_{\ov{X}} X)\times_{\ov{X}}\ov{U}\simeq\ov{X}_\la\times_{\ov{X}} U\simeq
\ov{X}_\la\times_{\ov{X}}\ov{U}\times_{\ov{U}} U\simeq\ov{U}_\la\times_{\ov{U}} U,$$
so the new kernels live on the correct spaces. 
\end{proof}

\subsection{Products}

We observe that condition (MSOD) is compatible with products 
(see \cite[Corollary 5.10]{kuz:sod-base-change} for a more general result).

\begin{lemma}\label{product-lem} 
Let $G$ (resp., $G'$) be a finite group acting on a smooth variety $X$ (resp., $X'$),
and assume that condition (MSOD) is satisfied for $(X,G)$ (resp., $(X',G')$).
Then condition (MSOD) is also satisfied for the action of
$G\times G'$ on $X\times X'$.
\end{lemma}

\begin{proof} Let $(\la,\la')$ be a conjugacy class in $G\times G'$. The corresponding scheme
$$\ov{Z}_{\la,\la'}=(\ov{X}_\la \times \ov{X'}_{\la'})\times_{\ov{X}\times\ov{X'}} (X\times X')$$
is naturally identified with $\ov{Z}_\la\times \ov{Z}_{\la'}$ so we can define the kernel
$\KK_{\la,\la'}$ on $\ov{Z}_{\la,\la'}$ as the exterior tensor product
$\KK_\la\boxtimes \KK_{\la'}$. 
The corresponding functor sends $F\boxtimes F'$, where $F\in\DD(\ov{X}_\la)$, $F'\in\DD(\ov{X'}_\la)$, 
to $\Phi_{\KK_\la}(F)\boxtimes\Phi_{\KK_{\la'}}(F')$. Thus, computing morphisms between such objects 
using the K\"unneth formula, we deduce that the functor $\Phi_{\KK_\la\boxtimes \KK_{\la'}}$ is fully faithful on objects of the form $F\boxtimes F'$,
and hence on all objects. Similarly, we check semiorthogonality
between the images and generation. Thus, we get a semiorthogonal decomposition of $\DD([X\times X'/(G\times G')])$
with respect to any total ordering of conjugacy classes in $G\times G'$ compatible with the partial
order $(\la_1,\la'_1)\le (\la_2,\la'_2)$ if $\la_1\le \la_2$ and $\la'_1\le \la'_2$ (where we use the total orders on
conjugacy classes in $G$ and $G'$ corresponding to the original decompositions).
\end{proof}

\section{$G$-invariant divisors and proof of Theorem \ref{SOD+thm}}
\label{sec:formalism}

\subsection{Smooth $G$-invariant divisors}
\label{ssec:smoothness-quotients}

Throughout this section we fix a smooth connected variety $X$ with an effective action of a finite group $G$, such that
$\ov{X}=X/G$ is smooth. We denote by $X^{fr}\sub X$ the open subset on which the action of $G$ is free.
Recall that $X_\la\sub X$ denotes the $\la$-invariant locus in $X$ (where $\la$ runs over a set of representatives
of conjugacy classes in $G$), and $\ov{X}_\la=X_\la/C(\la)$. Note that the ideal sheaf of $X_\la$ is generated locally
by elements of the form $\la^*(f)-f$, with $f\in\OO_X$, and with this subscheme structure $X_\la$ is smooth.

For each $\la$
and every connected component $Y\sub X_\la$, let us denote by $W(Y)$
the quotient of $C(\la)$ that acts effectively on $Y$, and let $Y^{fr}\sub X_\la$ denote the open subset on which
$W(Y)$ acts freely.

\begin{lemma}\label{flat-quotient-map-lem} 
The morphism $X\to \ov{X}=X/G$ is finite flat of degree $|G|$.
\end{lemma}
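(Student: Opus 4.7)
The plan is to combine three standard ingredients: finiteness of geometric quotients by finite groups, computation of the generic degree on the free locus, and miracle flatness.

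First I would note that finiteness of $X\to\ov{X}$ is standard: since $G$ is finite and $X$ is quasiprojective, the geometric quotient exists, and $X\to X/G$ is an integral (in fact finite) affine morphism, $\OO_{\ov{X}}=(\pi_\ast\OO_X)^G$, and locally $X$ is covered by $G$-invariant affines on which $\OO_X$ is integral over $\OO_X^G$. Surjectivity and finiteness follow.

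Next I would identify the degree. Since the $G$-action is effective and $X$ is smooth and connected, the free locus $X^{fr}\sub X$ is a dense $G$-invariant open subset, and its image $\ov{X}^{fr}\sub \ov{X}$ is open and dense. On $\ov{X}^{fr}$ the restricted map $X^{fr}\to \ov{X}^{fr}$ is a $G$-torsor, hence étale of degree $|G|$. In particular, the function field extension $k(X)/k(\ov{X})$ has degree $|G|$, so $\pi_\ast\OO_X$ is a coherent sheaf on $\ov{X}$ of generic rank $|G|$.

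For flatness I would invoke miracle flatness (see e.g.\ Matsumura, \emph{Commutative Ring Theory}, Thm.\ 23.1): a finite morphism from a Cohen-Macaulay scheme to a regular scheme whose fibers all have the same dimension is automatically flat. Here $X$ is smooth hence Cohen-Macaulay, $\ov{X}$ is smooth hence regular, and all (non-empty) fibers of the finite surjective map $\pi$ are zero-dimensional. Thus $\pi$ is flat, so $\pi_\ast\OO_X$ is a locally free $\OO_{\ov{X}}$-module. Since $\ov{X}$ is connected (being the quotient of the connected variety $X$) and the rank is $|G|$ on the dense open $\ov{X}^{fr}$, the rank is $|G|$ everywhere, which is the desired statement.

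There is no real obstacle here; the only subtlety worth pointing out is the application of miracle flatness, which crucially uses the hypothesis (assumed throughout the section) that $\ov{X}$ is smooth—without it, $\pi$ need not be flat even though $X$ is smooth and the fibers are zero-dimensional.
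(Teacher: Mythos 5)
Your proof is correct, but it takes a genuinely different route from the paper's. The paper checks flatness locally at each point: at a fixed point it linearizes the action formally, invokes smoothness of $\ov{X}$ to conclude the stabilizer is generated by pseudo-reflections, and then quotes the classical freeness of the coordinate ring over the invariants of a reflection group; the general point is handled by Luna's \'etale slice theorem, writing $X\to X/\St_x\to X/G$ with the second map \'etale near the relevant point. You instead get flatness in one stroke from miracle flatness: $X$ is Cohen--Macaulay, $\ov{X}$ is regular (the running hypothesis of the section, which you correctly flag as essential), and the finite surjective map between irreducible varieties of the same dimension has zero-dimensional fibers with matching local dimensions, so Matsumura's criterion applies. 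This is cleaner and avoids both the formal linearization and Luna's theorem; the trade-off is that it leans on the regularity of $\ov{X}$ as a black box, whereas the paper's argument makes visible \emph{why} smoothness of the quotient forces flatness (via the pseudo-reflection structure of the stabilizers). Your identification of the degree as $|G|$ via the generic rank on the free locus is the same as the paper's opening reduction ("since $\ov{X}$ is irreducible and the action on $X^{fr}$ is free, it suffices to prove flatness"), just stated after the flatness step rather than before.
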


\begin{proof}
It is well known that the morphism $X\to X/G$ is finite surjective of degree $|G|$ (see \cite[Ch.\ II.7]{Mum-ab-var}).
Since $X$ and $\ov{X}$ are smooth, it is flat by the miracle flatness theorem.
\end{proof}

Now let $H\subset X$ be a smooth $G$-invariant divisor. 
We have the induced action of $G$ on $H$, so we can consider varieties $\ov{H}=H/G$,
$H_\la\sub H$ and $\ov{H}_\la=H_\la/C(\la)$. 
It is easy to see that 
$$H_\la=H\cap X_\la,$$
the scheme-theoretic intersection.

We start by observing that the smoothness of the geometric quotient is preserved
upon passing to a smooth $G$-invariant divisor.

\begin{proposition} For a smooth $G$-invariant divisor $H\sub X$, the quotient $\ov{H}=H/G$ is smooth.
  \label{prop:geom-quotient-divisor-smooth}
\end{proposition}

\begin{proof}
  Assume first that \(x\in H\) is a \(G\)-invariant point. We can linearize the action in a formal neighborhood of \(x\) in \(X\), so the
  divisor \(H\) will be a $G$-invariant hyperplane. Since \(X/G\) is smooth at \(x\),
  \(G\) is generated by pseudo-reflections. Hence, the same is true for the induced action of $G$ on $H$, so the quotient \(H/G\) is
  smooth at \(x\).

  Now let \(x\in H\) be arbitrary, and let $\mathrm{St}_x\sub G$ denote the stabilizer subgroup of $x$. 
  By Luna's \'etale slice theorem,
  \cite{luna-slices-etale}, the map \(X/\mathrm{St}_x\to X/G\) is \'etale near the image of
  \(x\) in $X/\mathrm{St}_x$. Since \(X/G\) is smooth, this implies \(X/\mathrm{St}_x\)
  is also smooth at $x$. Thus, \(H/\mathrm{St}_x\) is smooth at $x$, by the previous
  argument. Since the mapping \(H/\mathrm{St}_x\to H/G\) is \'etale at the image of $x$ in  \(H/\mathrm{St}_x\), 
  we conclude that \(H/G\) is smooth at the image of \(x\) (using \cite[Theorem 17.11.1]{ega4-4}).
\end{proof}

\begin{corollary}\label{H-la-smooth-cor} 
For any $\la$, if $\ov{X}_\la$ is smooth then $\ov{H}_\la$ is smooth.
\end{corollary}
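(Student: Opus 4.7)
The plan is to reduce the statement to Proposition~\ref{prop:geom-quotient-divisor-smooth} applied componentwise on the connected components of $X_\la$, each equipped with the effective action of the corresponding group $W(Y)$. The crucial preliminary step, and the main obstacle, is to show that the intersection $H_\la = H\cap X_\la$ is itself smooth --- smoothness of $H$ and of $X_\la$ individually does not immediately force this. Once $H_\la$ is known to be smooth, decomposing both $X_\la$ and $H_\la$ into $C(\la)$-orbits of connected components reduces $\ov{H}_\la$ to a disjoint union of quotients $(H\cap Y)/W(Y)$, and Proposition~\ref{prop:geom-quotient-divisor-smooth} then handles each piece.

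For the smoothness of $H_\la$, I would fix a point $y\in H\cap X_\la$ and a local equation $f$ for $H$ near $y$. Since $f$ is $\la$-invariant and $y$ is $\la$-fixed, applying the chain rule to $f\circ \la = f$ at $y$ shows that $df(y)\in (T_y X)^*$ is a $\la$-invariant linear functional. Decomposing $T_y X = T_y X_\la \oplus N_y$ into the $\la$-fixed part and its complement (available in characteristic zero since $\la$ has finite order), the averaging operator over $\langle\la\rangle$ kills every vector in $N_y$, forcing $df(y)$ to vanish on $N_y$. Hence $N_y\subset \ker df(y) = T_y H$, so $T_y H + T_y X_\la = T_y X$ and $H$ meets $X_\la$ transversely at $y$. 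As $y$ was arbitrary and $X_\la$ is itself smooth in characteristic zero, this yields the smoothness of $H_\la$.

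To conclude, I would partition the connected components of $X_\la$ into $C(\la)$-orbits and pick a representative $Y$ from each orbit. One then has $\ov{X}_\la = \bigsqcup_{[Y]} Y/W(Y)$ and, correspondingly, $\ov{H}_\la = \bigsqcup_{[Y]} (H\cap Y)/W(Y)$. The hypothesis that $\ov{X}_\la$ is smooth translates to each $Y/W(Y)$ being smooth. Proposition~\ref{prop:geom-quotient-divisor-smooth} then applies to the effective $W(Y)$-action on the smooth connected variety $Y$ together with the smooth $W(Y)$-invariant divisor $H\cap Y$, which does not contain $Y$ and satisfies the density condition $(H\cap Y)\cap Y^{fr}$ dense in $H\cap Y$ by the standing hypothesis on $H$. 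This gives $(H\cap Y)/W(Y)$ smooth, and hence $\ov{H}_\la$ is smooth.
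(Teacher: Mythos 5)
Your overall strategy coincides with the paper's: establish that $H_\la=H\cap X_\la$ is a smooth divisor in $X_\la$ and then feed each component into Proposition~\ref{prop:geom-quotient-divisor-smooth}; your final paragraph (the reduction to $C(\la)$-orbits of components and the verification of the hypotheses of the proposition) is fine and in fact more explicit than the paper. The difference, and the problem, lies in your proof that $H_\la$ is smooth. You assert that a local equation $f$ of $H$ at $y$ ``is $\la$-invariant.'' This is not automatic: since $H$ is merely $\la$-invariant as a divisor, all one knows is that $f\circ\la=uf$ for a unit $u$, so that at the fixed point $y$ (where $f(y)=0$) the chain rule gives $df(y)\circ d\la_y=u(y)\,df(y)$ with $u(y)$ a root of unity that need not equal $1$; equivalently, $\la$ may act on the normal line of $H$ at $y$ by a nontrivial character (think of $x\mapsto -x$ on $\A^1$ with $H=\{0\}$). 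If $u(y)\neq 1$, your averaging step collapses: $df(y)$ then vanishes on the \emph{invariant} part $T_yX_\la$ rather than on $N_y$, and transversality fails. So as written the key step assumes exactly what needs to be argued.

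The gap is fixable using a hypothesis you never invoke at that point: if $u(y)\neq1$, then $T_yX_\la\subset T_yH$, which forces $(T_yH)^\la=T_yX_\la$ and hence (comparing the smooth fixed locus $H^\la$ with $Y$) that $H$ contains the component $Y$ of $X_\la$ through $y$ --- contradicting the standing assumption $Y\not\subset H$. So $u(y)=1$, your averaging argument applies, and transversality (hence smoothness of $H_\la$, and the correct codimension) follows. Note, however, that the paper gets smoothness of $H_\la$ much more cheaply: $H_\la$ is the fixed locus of the finite-order automorphism $\la$ of the smooth variety $H$, hence smooth in characteristic zero, with no transversality needed; the non-containment hypothesis is then only used to see that $H_\la$ is a divisor in $X_\la$. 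If you keep the transversality route you must supply the eigenvalue argument above; otherwise I would recommend switching to the fixed-locus argument.
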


\begin{proof}
	The scheme \(X_\lambda\) is smooth as the fixed locus of a finite order
	automorphism in $X$. Similarly, \(H_\lambda\) is smooth as \(H\) is smooth and
	\(H_\lambda\) is the fixed locus of \(\lambda\). Thus, $H_\la=H\cap X_\la$
	is a smooth divisor in $X_\la$, so we can apply 	Proposition \ref{prop:geom-quotient-divisor-smooth}.
\end{proof}

\begin{lemma}\label{cartesian-divisor-lem} Assume that $H\cap X^{fr}$ is dense in $H$.
Then the square
\begin{diagram}
H&\rTo{}& \ov{H}\\
\dTo{}&&\dTo{}\\
X&\rTo{}&\ov{X}
\end{diagram}
is exact cartesian.
\end{lemma}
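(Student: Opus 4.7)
The plan is to verify the two requirements of Definition~\ref{def:faithful-base-change} separately: that the square is cartesian, and that the base-change map $\bar\pi_\ast i_H^\ast\to i^\ast\pi_\ast$ is an isomorphism (writing $\pi\colon X\to\ov{X}$, $\bar\pi\colon H\to\ov{H}$, $i\colon \ov{H}\hra\ov{X}$, $i_H\colon H\hra X$). The second is immediate from the example given just after Definition~\ref{def:faithful-base-change}, since $\pi$ is flat by Lemma~\ref{flat-quotient-map-lem}. The substantive content is therefore to show that the natural closed embedding $H\hra X\times_{\ov{X}}\ov{H}=\pi^{-1}(\ov{H})$ is an isomorphism of subschemes of $X$.

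For this, set $H':=\pi^{-1}(\ov{H})$ as a closed subscheme of $X$. By Proposition~\ref{prop:geom-quotient-divisor-smooth}, $\ov{H}$ is smooth, hence an effective Cartier divisor in the smooth variety $\ov{X}$; by flatness of $\pi$, the pullback $H'=\pi^{\ast}\ov{H}$ is an effective Cartier divisor on $X$. Since $H$ is $G$-invariant, $\pi^{-1}(\pi(H))=H$ set-theoretically, so $\supp(H')=\supp(H)=H$. Combined with the smoothness of $X$ and of each connected component of $H$, this forces $H'=mH$ as Cartier divisors for some integer $m\geq 1$ (possibly varying from component to component).

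To pin down $m=1$, I would compute the multiplicity at a generic point $\eta$ of any component of $H$. Our standing hypothesis on $H$, applied with $\la=e$ and $Y=X$ (so that $Y^{fr}=X^{fr}$), says $H\cap X^{fr}$ is dense in $H$, forcing $\eta\in X^{fr}$. At $\eta$ the map $\pi$ is étale, so $\pi^{\ast}\ov{H}$ has the same multiplicity at $\eta$ as the reduced divisor $\ov{H}$ at $\pi(\eta)$, namely $1$. Hence $m=1$ and $H'=H$. The only step that really uses the geometric hypotheses on $H$ (beyond flatness and smoothness supplied by earlier results) is this multiplicity calculation, which depends precisely on the density of $H\cap X^{fr}$ in $H$; everything else is formal.
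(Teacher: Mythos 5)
Your proof is correct, but the mechanism you use to identify $H$ with $X\times_{\ov{X}}\ov{H}$ is genuinely different from the paper's. The paper compares the two schemes as finite covers of $\ov{H}$: both $H\to\ov{H}$ and $X\times_{\ov{X}}\ov{H}\to\ov{H}$ are finite flat of degree $|G|$ (the former by Lemma \ref{flat-quotient-map-lem} applied to the $G$-action on $H$, where the density of $H\cap X^{fr}$ in $H$ guarantees the degree is $|G|$ on every component; the latter by base change from $X\to\ov{X}$), so the canonical closed embedding of the first into the second is a surjection of locally free $\OO_{\ov{H}}$-modules of the same rank, hence an isomorphism. You instead compare the two schemes as effective Cartier divisors on $X$ and match multiplicities at the generic points of $H$, using the same density hypothesis to place those generic points inside the locus where $\pi$ is \'etale. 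Both routes reduce the scheme-theoretic identification to a generic computation over the free locus plus a rigidity statement (constancy of rank for a quotient of locally free modules, versus a Cartier divisor on a smooth variety being determined by its multiplicities along the components of its support). Your version needs the auxiliary input that $\ov{H}$ is an effective Cartier divisor in $\ov{X}$ (which you get from Proposition \ref{prop:geom-quotient-divisor-smooth}, or just from local factoriality of the smooth $\ov{X}$), which the paper's argument does not use; the paper's version instead needs the degree count for $H\to\ov{H}$ on each component. Your handling of the exactness half (flatness of $\pi$ together with the remark following Definition \ref{def:faithful-base-change}) coincides with what the paper leaves implicit.
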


\begin{proof}
Since $\ov{H}$ (resp., $H$) is a divisor in $\ov{X}$ (resp., $X$), and both $X$ and $\ov{X}$ are smooth, by \cite[Cor.\ 2.27]{kuz:hyperplane-section},
it is enough to check that our square is cartesian.
By Lemma \ref{flat-quotient-map-lem},
both maps $X\to \ov{X}$ and $H\to\ov{H}$ are finite flat of degree $|G|$ (here we use the assumption that
$H\cap X^{fr}$ is dense in $H$ and Proposition \ref{prop:geom-quotient-divisor-smooth} which assures that
$\ov{H}$ is smooth). The embedding of $H$ into $X$
factors through $X\times_{\ov{X}}\ov{H}$ which is a closed subscheme of $X$. 
Thus $H\sub X\times_{\ov{X}}\ov{H}$ is a closed embedding of schemes, both of which are 
finite flat of degree $|G|$ over $\ov{H}$, and the assertion follows.
\end{proof}

\begin{proposition}\label{two-cartesian-sq-prop} 
Assume that for some $\la$, $\ov{X}_\la$ is smooth, and that $H$ satisfies condition $(\ast)$ introduced before
Theorem \ref{SOD+thm}.
Then both squares in the diagram
\begin{equation}
\begin{diagram}\label{H-X-la-cart-diagram}
H_\la&\rTo{}& \ov{H}_\la&\rTo{}&\ov{H}\\
\dTo{}&&\dTo{}&&\dTo{}\\
X_\la&\rTo{}&\ov{X}_\la&\rTo{}&\ov{X}
\end{diagram}
\end{equation}
are exact cartesian.
\end{proposition}

\begin{proof}
By \cite[Cor.\ 2.27]{kuz:hyperplane-section}, it is enough to check that these squares are cartesian.
First, we observe that by
Lemma \ref{cartesian-divisor-lem}, the right square in the diagram
\begin{diagram}\label{H-X-la-cart-diagram}
H_\la&\rTo{}& H &\rTo{}&\ov{H}\\
\dTo{}&&\dTo{}&&\dTo{}\\
X_\la&\rTo{}& X &\rTo{}&\ov{X}
\end{diagram}
is cartesian. Since $H_\la$ is the scheme-theoretic intersection of $X_\la$ with $H$, the left square in this diagram is also cartesian.
Hence, the big rectangle in this diagram, which is the same
as the big rectangle in diagram \eqref{H-X-la-cart-diagram}, is cartesian.

Next, applying Lemma \ref{cartesian-divisor-lem} to every connected component $Y$ of $X_\la$, the group $W(Y)$ acting on it
and the divisor $Y\cap H_\la$, we
get that the left square is cartesian. Since the map $X_\la\to \ov{X}_\la$ is flat and surjective,
and the big rectangle is cartesian, we derive the same for the right square
(by checking that the map $\ov{H}_\la\to \ov{X}_\la\times_{\ov{X}}\ov{H}$ of $\ov{X}_\la$-schemes
becomes an isomorphism after the base change $X_\la\to \ov{X}_\la$).
\end{proof}

\subsection{Proof of Theorem \ref{SOD+thm}}

The condition (MSOD) for $(X,G)$ gives an $\ov{X}$-linear semiorthogonal decomposition of $\DD([X/G])$. To deduce from this
the same condition for $(H,G)$,
we want to apply Theorem \ref{thm:kernel-compatibility} to the base change of the morphism $[X/G]\to \ov{X}$ with respect to
the morphism $\ov{H}\to \ov{X}$, i.e., to $\XX=[X/G]$, $\SS=\ov{X}$, $\XX_i=\ov{X}_\la$ and $\TT=\ov{H}$.

Note that by assumption, $[X/G]$, $\ov{X}$ and $\ov{X}_\la$ are smooth. 
Also, the morphisms $[X/G]\to X/G=\ov{X}$ and $\ov{X}_\la\to \ov{X}$ are proper, so Theorem \ref{thm:kernel-compatibility} is applicable.

We claim that the corresponding diagram
\begin{diagram}
[H/G]&\rTo{}& \ov{H}\\
\dTo{}&&\dTo{}\\
[X/G]&\rTo{}&\ov{X}
\end{diagram}
is exact cartesian. Indeed, by Lemma \ref{cartesian-divisor-lem} the natural $1$-morphism of stacks 
$[H/G]\to [X/G]\times_{\ov{X}}\ov{H}$ over $[X/G]$ 
becomes an isomorphism $H\to X\times_{\ov{X}}\ov{H}$ after the base change $X\to [X/G]$. Hence, it is an equivalence.

Also, by Proposition \ref{two-cartesian-sq-prop}, the base change of
$\ov{X}_\la$ gives us $\ov{H}_\la$. Note that by Corollary \ref{H-la-smooth-cor}, $[H/G]$ and $\ov{H}_\la$ are smooth.
Thus, Theorem \ref{thm:kernel-compatibility} gives an $\ov{H}$-linear semiorthogonal
decomposition of $\Per([H/G])=\DD([H/G])$ with the components $\Per(\ov{H}_\la)=\DD(\ov{H}_\la)$.  
Furthermore, the kernel on $\ov{H}_\la\times_{\ov{H}} H$
giving the functor $\DD(\ov{H}_\la)\to \DD([H/G])$ is given by the pullback of $K_\la$.
Thus, all conditions of Definition \ref{SOD+def} are satisfied for the action of $G$ on $H$.
\qed

\subsection{Subschemes $Z_\la$ in $X_\la\times X$}\label{Z-la-sec}

The content of this subsection is not used anywhere else in the paper. Here we discuss certain natural subschemes of $X_\la\times_{\ov{X}} X$.
In some situations considered in \cite{polishchuk-vandenberg-equivariant} they are related to the equivariant cohomology of the Springer fibers.

Assume that $G$ is a finite group acting on a quasiprojective smooth variety $X$, such that all $\ov{X}_\la=X_\la/C(\la)$
are smooth.

For each $\la\in G/{\sim}$, let us define the closed subscheme $Z_\la\sub X_\la\times_{\ov{X}} X$ by
$$Z_\la:=X_\la\times_{\ov{X}} X,$$
and let $Z_{\la}^{\red}\sub Z_\la$ be the corresponding reduced subscheme. Note that
$Z_\la^{\red}$ 
is the union of the graphs of the embeddings $g:X_\la\to X$, for $g$ running over $G/C(\la)$.
\footnote{Our notation is different from \cite{polishchuk-vandenberg-equivariant} where $Z_\la$ denotes
the reduced subscheme.}
It is easy to see that $\ov{Z}_\la=Z_\la/C(\la)$, where $\ov{Z}_\la$ is defined by \eqref{ov-Z-la-eq}.
Also, we have $\ov{Z}_\la^{\red}=Z_\la^{\red}/C(\la)$ since both are subschemes of $\ov{Z}_\la$
defined by a nilpotent ideal and since the quotient of a reduced scheme by a finite group is reduced.

The schemes $\ov{Z}_\la^{\red}$ play an important role in the work \cite{polishchuk-vandenberg-equivariant}:
in the examples considered in that paper (see also Sec.\ \ref{ssec:sym-an-decomp} below),
the kernels of the functors defining the semiorthogonal decompositions of $\DD([X/G])$ are given
by some vector bundles on $\ov{Z}_\la^{\red}$.

The simplest example below shows that $Z_\la$ and $\ov{Z}_\la$ are typically nonreduced.

\begin{example}\label{double-cover-ex}
Let $X=\A^1$, $G=\Z_2$ acting on $\A^1$ by $x\mapsto -x$. We can take $t=x^2$ as a coordinate
on $X/G\simeq \A^1$. Then for $\la\neq 1$, $\ov{Z}_\la=Z_{\la}\sub\A^1$ is the subscheme corresponding to the ideal $(x^2)$.
\end{example}

Note that in the special case $\la=1$, we have $\ov{Z}_1=X$, which is reduced.
It turns out that the subscheme $Z_1=X\times_{\ov{X}}X \sub X\times X$ is still reduced (and is equal to the union
of the graphs of all $g\in G$ acting on $X$) provided the action of $G$ is effective. 

\begin{lemma}\label{Z1-red-lem} Assume that the action of $G$ on $X$ is effective, and the schemes $X$ and $X/G$ are
smooth. Then $Z_1$ is reduced.
\end{lemma}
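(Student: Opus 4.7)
The plan is to show that $Z_1$ is Cohen--Macaulay and generically reduced, from which reducedness will follow by Serre's criterion $(R_0)+(S_1)$.

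By Lemma \ref{flat-quotient-map-lem}, the quotient map $\pi\colon X\to \ov{X}$ is finite flat of degree $|G|$. Base-changing $\pi$ along itself, the first projection $p_1\colon Z_1=X\times_{\ov{X}}X\to X$ is also finite flat of degree $|G|$, so $\mathcal{O}_{Z_1}$ is locally free of rank $|G|$ over the smooth scheme $X$. Hence $Z_1$ is Cohen--Macaulay of dimension $\dim X$, and in particular satisfies $(S_1)$, i.e.\ has no embedded primes.

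For generic reducedness, I would analyse the fibre of $p_1$ over the generic point $\eta\in X$, namely $\Spec\bigl(k(X)\otimes_{k(\ov{X})}k(X)\bigr)$. Since $X$ is smooth and connected, $k(X)$ is a field; effectiveness of the $G$-action implies $G\hookrightarrow \Aut_{k(\ov{X})}(k(X))$ (an automorphism of the integral scheme $X$ acting trivially on the generic point acts trivially everywhere), with fixed field $k(\ov{X})=k(X)^G$ because $\ov{X}=X/G$ is the geometric quotient. By Artin's theorem, $k(X)/k(\ov{X})$ is then a Galois extension with group $G$, and standard Galois theory gives
\[
  k(X)\otimes_{k(\ov{X})}k(X)\;\cong\;\prod_{g\in G}k(X),
\]
which is reduced. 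The irreducible components of $Z_1^{\red}$ are the graphs $\Gamma_g$ ($g\in G$), pairwise distinct by effectiveness, and each maps isomorphically to $X$ under $p_1$; localising $\mathcal{O}_{Z_1}$ at the generic point of any $\Gamma_g$ recovers the corresponding factor $k(X)$ of the ring above. Thus $Z_1$ is reduced at each generic point, i.e.\ satisfies $(R_0)$.

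Combining $(R_0)$ and $(S_1)$ yields that $Z_1$ is reduced, as desired. The only real subtlety is the Galois identification $k(X)\otimes_{k(\ov{X})}k(X)\cong \prod_G k(X)$, which genuinely uses both hypotheses of the lemma: effectiveness (so $G$ embeds in $\Aut k(X)$ with the correct fixed field) and connectedness of $X$ (so that $k(X)$ is a field at all). Everything else is formal consequences of finite flatness of $\pi$ together with smoothness of $X$.
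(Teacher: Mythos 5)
Your proof is correct and follows essentially the same strategy as the paper's: finite flatness of $p_1\colon Z_1\to X$ (from Lemma \ref{flat-quotient-map-lem}) rules out embedded components, and effectiveness of the action gives reducedness over the generic point of $X$. The paper packages the first step as ``the nilradical would be a torsion submodule of the locally free sheaf $p_{1*}\OO_{Z_1}$'' rather than via Serre's $(R_0)+(S_1)$ criterion, and merely asserts the generic reducedness that you correctly justify by the Galois-theoretic isomorphism $k(X)\otimes_{k(\ov{X})}k(X)\cong\prod_{g\in G}k(X)$.
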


\begin{proof} Since the projection $X\to \ov{X}$ is finite flat, the same is true about the projection $p_1:Z_1\to X$.
Thus, $p_{1*}\OO_{Z_1}$ is locally free over $\OO_X$, in particular, it is torsion free as an $\OO_X$-module.
Furthermore, the fact that the action of $G$ is effective implies that $Z_1$ is reduced over a generic point of $X$.
Hence, the nilradical of $\OO_{Z_1}$ would give a torsion submodule $p_{1*}\OO_{Z_1}$, so this nilradical has to be trivial.
\end{proof}

\subsection{Passing to the quotient stacks}

Note that a general theory of inducing semiorthogonal decompositions on quotients of varieties by actions of reductive groups
is considered in \cite{elagin:descent-sod}. We need an analogous result where instead of varieties we
consider stacks of the form $[X/G]$.

Namely, assume that condition (MSOD) holds for a pair $(X,G)$.
Assume in addition that there is
a reductive algebraic group $\G$ acting on $X$, such that the actions of $G$
and $\G$ commute and $[X/\G]$ is a DM-stack. In particular, the subvarieties $X_\la$ acquire the action of $C(\la)\times \G$
and there is an induced action of $\G$ on $\ov{X}_\la=X_\la/C(\la)$ and on $\ov{Z}_\la$. This
action is compatible with the projections to $\ov{X}_\la$ and to $X$.
Assume also that each kernel $\KK_\la$ in $\DD([\ov{Z}_\la/G])$
comes from an object $\wt{\KK}_\la$ in $\DD([\ov{Z}_\la/(G\times \G)])$.
In this case each $\wt{\KK}_\la$ defines the Fourier-Mukai functor
$$\Phi^{\G}_{\KK_\la}:\DD([\ov{X}_\la/\G])\to \DD([X/(G\times\G)])$$
that fits into a commutative square
\begin{diagram}
\DD([\ov{X}_\la/\G]) &\rTo{\Phi_{\KK_\la}^\G}& \DD([X/(G\times\G)])\\
\dTo{}&&\dTo{}\\
\DD(\ov{X}_\la)&\rTo{\Phi_{\KK_\la}}& \DD([X/G])
\end{diagram}
where the vertical arrows are given by forgetting the $\G$-action.
In other words, $\Phi^{\G}_{\KK_\la}$ is defined by the same formula as $\Phi_{\KK_\la}$, but we view
the result as an object of the $G\times\G$-equivariant derived category.

\begin{lemma}\label{passing-to-quot-lem}
The functors $\Phi^\G_{\KK_\la}$ are fully faithful and their images give a
semiorthogonal decomposition
$$\DD([X/(G\times\G)])=\lan \DD([\ov{X}_{\la_1}/\G]),\ldots,\DD([\ov{X}_{\la_r}/\G])\ran.$$
\end{lemma}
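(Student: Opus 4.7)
The plan is to deduce this from the non-equivariant statement (MSOD) by passing to $\G$-invariants, exploiting the fact that $\G$ is reductive so the forgetful functor $\DD[Y/\G]\to\DD(Y)$ for a quotient stack is faithful on $\Hom$ and its Hom-spaces compute $\G$-invariants of Hom-spaces downstairs.

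First I would set up the basic formula: for objects $F,F'$ in $\DD[Y/\G]$ (with $\G$ reductive) one has $\RHom_{[Y/\G]}(F,F')\simeq \RHom_Y(F,F')^\G$ (derived invariants). Applying this to $Y=\ov{X}_\la$ and $Y=[X/G]$, and using that the forgetful functor intertwines $\Phi^\G_{\KK_\la}$ with $\Phi_{\KK_\la}$ (together with the fact that $\wt{\KK}_\la$ lifts $\KK_\la$, so the $\Hom$ of $\Phi^\G$-images carries a $\G$-action compatible with the identification coming from $\Phi_{\KK_\la}$), full faithfulness of $\Phi_{\KK_\la}$ from (MSOD) implies full faithfulness of $\Phi^\G_{\KK_\la}$ after taking $\G$-invariants. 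The semiorthogonality $\RHom([\ov{X}_{\la_j}/\G],[\ov{X}_{\la_i}/\G])=0$ for $j<i$ follows the same way: the $\G$-invariants of the zero complex are zero.

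Next I would handle generation. This is the main obstacle, since full faithfulness plus semiorthogonality only produces an admissible decomposition on the subcategory generated by the images, not on all of $\DD[X/(G\times\G)]$. The cleanest route is to invoke Elagin's descent theorem for semiorthogonal decompositions along quotients by reductive groups (\cite{elagin:descent-sod}): since (MSOD) for $(X,G)$ provides a semiorthogonal decomposition of $\DD[X/G]$ whose components are preserved by the $\G$-action (precisely because the kernels $\KK_\la$ lift to $\G$-equivariant kernels $\wt{\KK}_\la$), Elagin's result gives a semiorthogonal decomposition of $\DD[[X/G]/\G]=\DD[X/(G\times\G)]$ whose components are the $\G$-equivariant lifts of the original components. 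Alternatively, one can argue directly: for any $\mathcal F\in\DD[X/(G\times\G)]$, its image downstairs admits a canonical filtration with associated graded in the pieces $\DD(\ov{X}_\la)$; by uniqueness this filtration is $\G$-equivariant, so lifts to $\DD[X/(G\times\G)]$, and its graded pieces lie in the images of $\Phi^\G_{\KK_\la}$.

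Finally, I would identify the lifted components with $\DD[\ov{X}_\la/\G]$. This is where the commutativity of the square in the statement is used: the image of $\Phi^\G_{\KK_\la}$ forgets to the image of $\Phi_{\KK_\la}$, which is $\DD(\ov{X}_\la)$, and the equivariant structures match because the $\G$-action on $[\ov{Z}_\la/G]$ inducing $\Phi^\G_{\KK_\la}$ is precisely the one coming from the $\G$-action on $\ov{X}_\la$. Combined with full faithfulness established in the first step, this identifies the $\la$-component with $\DD[\ov{X}_\la/\G]$, completing the proof. The main difficulty is the generation step — the other parts reduce to bookkeeping of invariants under a reductive action — which is why Elagin's descent theorem (or an equivalent direct verification using the existence of a $\G$-equivariant projector onto each component) is essential here.
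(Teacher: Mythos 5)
Your first two steps (full faithfulness and semiorthogonality via $\RHom_{[Y/\G]}(F,F')\simeq \RHom_Y(F,F')^\G$ and the compatibility of $\Phi^\G_{\KK_\la}$ with $\Phi_{\KK_\la}$ under the forgetful functors) are exactly the paper's argument. The divergence, and the problem, is in the generation step. Citing Elagin's descent theorem does not literally close the argument: that theorem is stated for a reductive group acting on a \emph{variety}, whereas here the ambient category is $\DD[X/G]$ for the quotient stack, and the paper explicitly flags that it needs ``an analogous result where instead of varieties we consider stacks of the form $[X/G]$'' --- which is precisely what this lemma supplies, so invoking Elagin here is circular in spirit. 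Your ``alternative direct argument'' has a genuine gap: uniqueness of the decomposition filtration of $\forg(\FF)$ in $\DD[X/G]$ tells you only that each graded piece is isomorphic to its $\G$-translates; it does \emph{not} produce a coherent $\G$-equivariant structure on the filtration, because in a derived/triangulated setting one needs higher compatibilities (this is exactly the nontrivial content of descent for semiorthogonal decompositions, proved by comonadic techniques, not by uniqueness of cones).

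The paper avoids descent entirely with a short adjunction argument that you gesture at in your parenthetical about equivariant projectors but do not carry out. Namely, the right adjoints $(\Phi^\G_{\KK_\la})^!$ and $\Phi^!_{\KK_\la}$ also commute with the forgetful functors. If $\FF\in\DD[X/(G\times\G)]$ is right orthogonal to all the images of the $\Phi^\G_{\KK_\la}$, then $(\Phi^\G_{\KK_\la})^!(\FF)=0$ for all $\la$, hence $\Phi^!_{\KK_\la}(\forg(\FF))=0$ for all $\la$, so $\forg(\FF)$ is right orthogonal to every piece of the (MSOD) decomposition of $\DD[X/G]$ and therefore vanishes; conservativity of the forgetful functor then gives $\FF=0$. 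Combined with admissibility of the images (which you do get from your first step plus the existence of the adjoints), this yields generation. You should replace the descent/filtration argument with this one, or else actually prove the stacky analogue of Elagin's theorem rather than cite it.
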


\begin{proof}
For a pair of objects $\FF,\GG\in \DD([\ov{X}_\la/\G])$, we have a commutative square
\begin{diagram}
\Hom_{\DD([\ov{X}_\la/\G])}(\FF,\GG) &\rTo{\Phi^\G_{\KK_\la}}& \Hom_{\DD([X/(G\times\G)])}(\Phi^\G_{\KK_\la}(\FF),\Phi^\G_{\KK_\la}(\GG))\\
\dTo{\forg}&&\dTo{\forg}\\
\Hom_{\DD(\ov{X}_\la)}(\FF,\GG)^{\G}&\rTo{\Phi_{\KK_\la}}& \Hom_{\DD([X/G])}(\Phi_{\KK_\la}(\FF),\Phi_{\KK_\la}(\GG))^\G
\end{diagram}
in which the vertical arrows are isomorphisms since $\G$ is reductive. 
Furthermore, since $\Phi_{\KK_\la}$ is fully faithful, the bottom horizontal arrow is
an isomorphism. Hence, the top horizontal arrow is also an isomorphism, i.e., $\Phi^\G_{\KK_\la}$ is fully faithful.

Similarly, if $\Hom(\Phi_{\KK_\la}(\cdot),\Phi_{\KK_\mu}(\cdot))=0$ then by passing to $\G$-invariants, we deduce that
$\Hom(\Phi_{\KK_\la}^\G(\cdot),\Phi_{\KK_\mu}^\G(\cdot))=0$. Hence, the semiorthogonality still holds for the images of
$\Phi_{\KK_\la}^\G$.

Finally, to see that the images of \( \Phi_{\KK_\la}^\G\) generate $\DD([X/(G\times\G)])$, we
observe that the right adjoint functors $(\Phi^\G_{\KK_\la})^!$ and $\Phi_{\KK_\la}^!$ to $\Phi^\G_{\KK_\la}$ and
$\Phi_{\KK_\la}$ are still compatible with 
the forgetful functors, i.e., we have a commutative diagram
\[
\begin{tikzcd}
  \DD([X/G\times\G])\ar{r}{(\Phi_{\KK_\lambda}^\G)^!}\ar{d}{\forg} & \DD([\ov{X}_\lambda/\G])
  \ar{d}{\forg} \\
  \DD([X/G])\ar{r}{\Phi_{\KK_\lambda}^!} & \DD(\ov{X}_\lambda)
\end{tikzcd}
\]
Indeed, for any $F\in \DD([X/G\times\G])$, we can define $(\Phi_{\KK_\la}^\G)^!(F)$ by
the same formula for $\Phi_{\KK_\la}^!$ (see \eqref{adjoint-fun-formula}) understood in terms of
equivariant categories. Then the above commutative diagram holds and the adjunction follows 
from the chain of isomorphisms
\begin{align*}
&\Hom_{\DD([X/G\times\G])}(\Phi_{\KK_\la}^\G(F),F')\simeq \Hom_{\DD([X/G])}(\Phi_{\KK_\la}(F),F')^{\G}\\
&\simeq
\Hom_{\DD(\ov{X}_\la)}(F,\Phi_{\KK_\la}^!(F'))^\G
\simeq \Hom_{\DD([\ov{X}_\lambda/\G])}(F,(\Phi_{\KK_\la}^\G)^!(F')).
\end{align*}

Now suppose \(\FF\in\DD([X/G\times\G])\) is right
orthogonal to the images of \( \Phi_{\KK_\lambda}^\G\). Then by adjointness,
\((\Phi_{\KK_\lambda}^\G)^!(\FF)=0\)
for all \(\lambda\). Thus, using the above commutative diagram, we obtain that $\forg(\FF)$
is right orthogonal to the images of all $\Phi_{\KK_\la}$. Using the original semiorthogonal decomposition,
we conclude that \(\forg(\FF)=0\). But the forgetful functor is conservative, so
\(\FF=0\).
\end{proof}

\begin{remark} The above lemma can also be deduced from the conservative descent theorem of Bergh and Schn\"urer \cite{Bergh-Schn}.
\end{remark}

\section{Examples of semiorthogonal decompositions obtained from Theorem \ref{SOD+thm}}
\label{sec:pvdb-sym}

\subsection{Motivic decomposition for \(\mathcal{D}([\mathbb{A}^n/S_n])\)}
\label{ssec:sym-an-decomp}

Now we will focus on the case of the standard action of the symmetric group
$S_n$ on the affine \(n\)-space, \(V=\mathbb{A}^n\). In this case $\ov{V}=V/S_n$ is still the affine space $\mathbb{A}^n$
and the morphism $V\to \ov{V}$ is given by the elementary symmetric polynomials.

The conjugacy classes of \(S_n\) are labeled by partitions
\(\lambda=(\la_1\ge \la_2\ge\ldots)\) of \(n\). Recall that the \textit{dominance partial ordering} \(\leq\) on
partitions of \(n\) is defined by \(\lambda\geq \mu\) if
\(\lambda_1+\cdots+\lambda_i\geq \mu_1 + \cdots + \mu_i\) for all \(i\geq 1\).
Note that \( (n)\) is the biggest partition and \( (1^n)\) is the smallest.

For each partition \(\lambda=(\la_1\ge \la_2\ge\ldots)\) of \(n\), we choose as a representative of the corresponding conjugacy
class the permutation $(1\ldots\la_1)(\la_1+1\ldots \la_1+\la_2)\ldots$ The corresponding fixed locus
\(V_\lambda\sub V\) is isomorphic to $\mathbb{A}^{\ell(\la)}$, where $\ell(\la)$ is the length of $\la$, and the isomorphism
is given by
\begin{equation}\label{V-la-partition-eq}
(y_1,\ldots,y_\ell)\mapsto (\underbrace{y_1,\ldots,y_1}_{\la_1},\underbrace{y_2,\ldots,y_2}_{\la_2},\ldots,
\underbrace{y_\ell,\ldots,y_\ell}_{\la_\ell}).
\end{equation}

If we write $\la$ exponentially: $\la=(1^{r_1},2^{r_2},\ldots,p^{r_p})$ (where $r_i$ is the multiplicity of the part $i$),
then we have 
$$C(\la)\simeq\prod_{i=1}^p ((\Z/i\Z)^{r_i}\rtimes S_{r_i}),$$
and the quotient of $C(\la)$ that acts effectively on $V_\lambda$ is the
group \(W_\lambda \simeq \prod_i S_{r_i}\).
We have
$$\ov{V}_\lambda=V_\lambda/W_\la\simeq \A^{\ell(\la)}=\prod_{i=1}^p \A^{r_i},$$ 
and the quotient map $V_\la\to \ov{V}_\la$ is the product of maps $\A^{r_i}\to \A^{r_i}$
given by the elementary symmetric functions.
Under the identiication \eqref{V-la-partition-eq},
the open subset $V_\lambda^{fr}\sub V_\lambda$, on which $W_\la$ acts freely, is given by
\begin{equation}\label{V-la-fr-eq}
V_\la^{fr}=\{(y_1,\ldots,y_\ell) \ | \ y_i\neq y_j  \text{  whenever } \la_i=\la_j, i\neq j\}.
\end{equation}

Recall that we have the reduced subscheme \(Z^{\red}_\lambda\subset V_\lambda\times V\),
invariant under the action of $W_\la\times S_n$ (see Sec.\ \ref{Z-la-sec}). 
Explicitly, this is the union of graphs of all maps $V_\lambda\to V: x\mapsto \si x$ over $\si\in S_n/C(\la)$.
The quotient $Z^{\red}_\la/W_\la$ can be identified with the reduced subscheme 
$$\ov{Z}^{\red}_\la\sub \ov{Z}_\la=\ov{V}_\la\times_{\ov{V}} V$$
defined as in \eqref{ov-Z-la-eq}.
Let us consider the kernels
$$\KK_\lambda = \OO_{\ov{Z}^{\red}_\lambda}$$ 
on $\ov{Z}_\la$.

\begin{theorem}[\protect{\cite[Theorem 6.3.1]{polishchuk-vandenberg-equivariant}}]
  For each \(\lambda\), \(|\lambda| = n\), the functor
  \(\Phi_{\KK_\lambda}:\DD(\ov{V}_\la)\to \DD([V/S_n])\) is fully faithful. The images of these functors
give  a semiorthogonal
	decomposition
	\[
		\mathcal{D}([\mathbb{A}^n/S_n]) = \langle 
		\mathcal{D}(\ov{V}_{\lambda_1}), \ldots,
		\mathcal{D}(\ov{V}_{\lambda_r})\rangle.
	\]
for any total ordering \(\lambda_1<\cdots<\lambda_r\) of
	partitions of \(n\) refining the dominance order.
	\label{thm:pvdb-quotient-sn}
Thus, condition (MSOD) holds for the action of $S_n$ on $\A^n$ and the kernels $(\OO_{\ov{Z}^{\red}_\la})$.
\end{theorem}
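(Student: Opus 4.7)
My plan is to verify the three conditions defining a semiorthogonal decomposition of $\DD[V/S_n]$ for the given collection of Fourier--Mukai functors: fully-faithfulness of each $\Phi_{\KK_\lambda}$, the semiorthogonality relations in the prescribed dominance-refining order, and generation. The key geometric input throughout is the explicit description $Z_\lambda^{\red} = \bigcup_{\sigma \in S_n/C(\lambda)} \Gamma_{\sigma|_{V_\lambda}} \subset V_\lambda \times V$, together with the identity $V_\lambda \cap \sigma \cdot V_\lambda = V_\lambda$ iff $\sigma \in C(\lambda)$ (with strictly smaller intersection otherwise).

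First I would establish fully-faithfulness of $\Phi_{\KK_\lambda}$ by computing the convolution $\Phi_{\KK_\lambda}^! \circ \Phi_{\KK_\lambda}$ directly. Up to an equivariant twist, this convolution is computed as the pushforward--pullback along the fiber product $\ov{Z}^{\red}_\lambda \times_{[V/S_n]} \ov{Z}^{\red}_\lambda$. The diagonal component of this fiber product (those pairs $(\sigma, \tau)$ with $\sigma^{-1}\tau \in C(\lambda)$) contributes the structure sheaf of the diagonal on $\ov{V}_\lambda \times \ov{V}_\lambda$, giving the identity functor after accounting for $W_\lambda$-invariants; the off-diagonal contributions, coming from cosets $\sigma^{-1}\tau \notin C(\lambda)$, should vanish after $W_\lambda \times W_\lambda$-invariants by a cancellation argument exploiting the free action of $C(\lambda)$ permuting these cosets. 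This step is subtle because of the non-reducedness of $\ov Z_\lambda$ (cf.\ Lemma \ref{Z1-red-lem}) and requires care in choosing $\KK_\lambda = \OO_{\ov{Z}^{\red}_\lambda}$ rather than $\OO_{\ov{Z}_\lambda}$.

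For semiorthogonality, given $\mu < \lambda$ in the dominance order, I would show $\Hom(\Phi_{\KK_\lambda}(G), \Phi_{\KK_\mu}(F)) = 0$ by reducing via adjunction to vanishing of an $\Ext$-sheaf on $\ov{V}_\lambda \times \ov{V}_\mu$. The relevant convolution involves intersections $\sigma \cdot V_\lambda \cap V_\mu$ for $\sigma \in S_n$; a dominance inequality $\mu < \lambda$ together with the combinatorics of cycle-type partitions forces each such intersection to be a proper linear subspace of $V_\mu$, and the resulting positive codimension produces the desired vanishing. The main obstacle here is bookkeeping: for each ordered pair $(\lambda,\mu)$ with $\mu < \lambda$ one must track the full list of orbit-representatives $\sigma$, establish a uniform codimension bound, and invoke vanishing of the corresponding $\Ext$-sheaves — which I would organize by filtering $\ov{Z}^{\red}_\lambda \times_{[V/S_n]} \ov{Z}^{\red}_\mu$ by the partition type of the intersection $\sigma \cdot V_\lambda \cap V_\mu$.

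Finally, for generation, I would prove that any object $\FF \in \DD[V/S_n]$ right-orthogonal to every image is zero, using the right adjoints $\Phi_{\KK_\lambda}^!$. The right orthogonality condition forces the support of $\FF$ (in an appropriate equivariant sense) to avoid every $W_\lambda$-quotient of $V_\lambda$, hence $\FF = 0$ by the stratification of $V$ by fixed-point loci together with a noetherian induction on support. Alternatively, one can proceed by induction on $n$ using an equivariant Koszul-type resolution that expresses structure sheaves of deeper strata in terms of structure sheaves of shallower strata, with the base case $\lambda = (1^n)$ giving the whole $\D(V/S_n) \simeq \D(\A^n)$. I expect fully-faithfulness to be the hardest step, owing to the convolution analysis on the singular schemes $\ov{Z}^{\red}_\lambda$ and the discrepancy between $\ov{Z}_\lambda$ and its reduction.
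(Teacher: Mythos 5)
The paper does not prove this statement: Theorem \ref{thm:pvdb-quotient-sn} is imported verbatim from \cite[Theorem 6.3.1]{polishchuk-vandenberg-equivariant} and is used here only as a black-box input (to establish condition (MSOD) for $(\A^n,S_n)$, feeding into Theorem \ref{Sn-inv-hyper-thm}). So there is no in-paper proof to compare against; your attempt has to be measured against the substantial argument in the cited reference.

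Your outline has the right overall shape (fully-faithfulness via the convolution kernel on $\ov{Z}^{\red}_\lambda\times_{[V/S_n]}\ov{Z}^{\red}_\lambda$, semiorthogonality via the geometry of the intersections $\sigma V_\lambda\cap V_\mu$, generation via the stratification by stabilizers), but the two genuinely hard steps are asserted rather than proved, and the mechanisms you invoke are not correct as stated. For fully-faithfulness, the off-diagonal components contribute complexes supported on the images of the proper subspaces $\sigma V_\lambda\cap V_\lambda$ with $\sigma\notin C(\lambda)$; these do not disappear by ``a cancellation argument exploiting the free action of $C(\lambda)$ permuting the cosets'' --- the group action only organizes the components into orbits, it does not make their cohomological contributions cancel. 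One must prove actual vanishing of the relevant $\Ext$/$\Tor$ sheaves after passing to invariants, and this is where essentially all of the difficulty of \cite[Theorem 6.3.1]{polishchuk-vandenberg-equivariant} resides. Likewise, for semiorthogonality, ``positive codimension produces the desired vanishing'' is not a valid inference: for linear subspaces $L,M\subset V$ the sheaf $\Ext^\bullet(\OO_L,\OO_M)$ is an exterior algebra on a conormal-type space supported on $L\cap M$ and is generically nonzero, so a codimension bound alone yields nothing. What is actually needed --- and what the dominance order is for --- is a weight/isotypic analysis showing that the relevant invariant parts of these $\Ext$ sheaves vanish in the prescribed direction. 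Your generation step is the most plausible of the three and is close in spirit to how such arguments are usually closed, but as written the proposal does not constitute a proof of the two key vanishing statements.
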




For each $\la$, the natural $\G_m$-action on $V_\la$ induces a $\G_m$-action on $\ov{V}_\la$ such that
the morphism $V_\la\to \ov{V}_\la$ is $\G_m$-equivariant. We denote by
\[
\P \ov{V}_\lambda = [(\ov{V}_\lambda\setminus \{0\})/\G_m]
\]  
the corresponding weighted projective space stack.
More precisely, for $\la=(1^{r_1},2^{r_2},\ldots,p^{r_p})$, we get the weighted projective space stack
\[
\P \ov{V}_\lambda=\P(1,\ldots,r_1,1,\ldots,r_2,\ldots,1,\ldots,r_p).
\]
 

\begin{corollary}
	There is a semiorthogonal decomposition
	\[
		\mathcal{D}([\mathbb{P}^{n-1}/S_n])\cong \langle 
		\mathcal{D}(\P \ov{V}_{\lambda_1}), \ldots,
		\mathcal{D}(\P \ov{V}_{\lambda_r})\rangle.
	\]
	\label{cor:pvdb-quotient-descent-pn}
\end{corollary}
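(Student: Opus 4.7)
The plan is to obtain the desired decomposition in two steps: first restrict to the punctured space $V\setminus\{0\}$, then descend along the free $\G_m$-action.

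First, I would apply Corollary \ref{open-res-cor} to the $S_n$-invariant open subset $U = V\setminus\{0\}\subset V$. By Theorem \ref{thm:pvdb-quotient-sn}, the pair $(V,S_n)$ satisfies (MSOD) with kernels $\KK_\la = \OO_{\ov{Z}^{\red}_\la}$, so Corollary \ref{open-res-cor} yields that $(U,S_n)$ satisfies (MSOD). Explicitly, the preimage of $\ov{U} = U/S_n$ in $\ov{V}_\la$ is $\ov{V}_\la\setminus\{0\}$, so this gives an $S_n$-linear semiorthogonal decomposition
$$\DD[(V\setminus\{0\})/S_n] = \lan \DD(\ov{V}_{\la_1}\setminus\{0\}), \ldots, \DD(\ov{V}_{\la_r}\setminus\{0\})\ran,$$
with kernels obtained as pull-backs of the $\OO_{\ov{Z}^{\red}_\la}$ to the open subschemes $(\ov{Z}^{\red}_\la \cap (\ov{V}_\la\setminus\{0\})\times (V\setminus\{0\}))$.

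Next, I would apply Lemma \ref{passing-to-quot-lem} with $\G = \G_m$ acting by scalar multiplication. The $\G_m$-action commutes with the $S_n$-action on $V$, preserves $V\setminus\{0\}$, and induces compatible $\G_m$-actions on each $\ov{V}_\la\setminus\{0\}$ and on each $\ov{Z}^{\red}_\la \cap (\ov{V}_\la\setminus\{0\})\times (V\setminus\{0\})$ (since $\ov{Z}^{\red}_\la$ is cut out by $\G_m$-equivariant equations, being a union of graphs of linear maps $\ov{V}_\la \to V$). Hence the kernels lift canonically to the $(S_n\times \G_m)$-equivariant derived category. Lemma \ref{passing-to-quot-lem} then produces an SOD
$$\DD[(V\setminus\{0\})/(S_n\times\G_m)] = \lan \DD[(\ov{V}_{\la_1}\setminus\{0\})/\G_m], \ldots, \DD[(\ov{V}_{\la_r}\setminus\{0\})/\G_m]\ran.$$

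Finally, I would identify both sides with the stated objects. Since $\G_m$ acts freely on $V\setminus\{0\}$ with quotient $\P^{n-1}$, and this action commutes with $S_n$, we have $[(V\setminus\{0\})/(S_n\times\G_m)] \simeq [\P^{n-1}/S_n]$; on the right side, by the definition of $\P\ov{V}_\la$ in the paragraph preceding the corollary, $[(\ov{V}_\la\setminus\{0\})/\G_m] = \P\ov{V}_\la$. These identifications convert the SOD above into the one claimed in the corollary. There is no substantial obstacle here: the only point worth double-checking is the $\G_m$-equivariance of the kernels $\OO_{\ov{Z}^{\red}_\la}$, which is immediate from the description of $\ov{Z}^{\red}_\la$ as (the quotient by $W_\la$ of) the union of graphs of $\G_m$-equivariant maps.
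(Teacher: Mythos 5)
Your proposal is correct and follows exactly the paper's own argument: restrict via Corollary \ref{open-res-cor} to $V\setminus\{0\}$, then apply Lemma \ref{passing-to-quot-lem} using the natural $\G_m$-equivariant structures on the kernels (structure sheaves of $\G_m$-invariant correspondences). The final identifications of the quotient stacks, which the paper leaves implicit, are handled correctly.
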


\begin{proof}
	This follows from Theorem \ref{thm:pvdb-quotient-sn} by first restricting the semiorthogonal decomposition 
	 to the open subset $\A^n\setminus\{0\}\subset\A^n$ using Corollary \ref{open-res-cor}, and then applying 
	Lemma \ref{passing-to-quot-lem} to the natural $\G_m$-equivariant structures on the corresponding kernels
	(which are the structure sheaves of $\G_m$-invariant correspondences).
	\end{proof}

\subsection{\(S_{n}\)-invariant hypersurfaces}\label{Sn-hypersurface-sec}

Let \(f\in \mathbb{C}[x_{1},\ldots,x_{n}]^{S_{n}}\) be an \(S_{n}\)-invariant
polynomial, and let $H(f)\sub V=\mathbb{A}^n$ be the corresponding hypersurface.

\begin{corollary} 
  Let $T\sub V$ be an $S_n$-invariant closed subset containing the
  singular locus of $H(f)$.  Assume that for every partition $\la$, such that
  $V_\la\setminus T\neq\emptyset$, the restriction $f|_{V_\la}$ is not
  identicially zero and the intersection $H(f|_{V_\la})\cap V_\la^{fr}\setminus T$ is dense in $H(f|_{V_\la})\setminus T$.
  Then the pair $(H(f)\setminus T,S_n)$ satisfies (MSOD), so we have a semiorthogonal decomposition
  $$\DD([H(f)\setminus T/S_n])=\lan \DD((H(f|_{V_{\la_1}})\setminus T)/W_{\la_1}),\ldots,\DD((H(f|_{V_{\la_r}})\setminus T)/W_{\la_r})
  \ran$$
where $\la_1<\ldots<\la_r$ is a total order on partitions of $n$ refining the dominance order.
  \label{thm:sn-inv-divisor}
\end{corollary}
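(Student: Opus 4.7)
The plan is to combine the three results already proved in the paper in sequence: Theorem \ref{thm:pvdb-quotient-sn} gives (MSOD) for the ambient pair, Corollary \ref{open-res-cor} lets us pass to the $S_n$-invariant open set $U = \mathbb{A}^n \setminus T$, and Theorem \ref{SOD+thm} then extends (MSOD) to a smooth $G$-invariant divisor inside $U$. The hypersurface $H(f) \setminus T \subset U$ is exactly such a divisor, since by assumption $T$ contains the singular locus of $H(f)$, so the complement $H(f) \setminus T$ is smooth, and it is $S_n$-invariant because $f$ is $S_n$-invariant.

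First I would spell out the application of Corollary \ref{open-res-cor}: starting from the (MSOD) of Theorem \ref{thm:pvdb-quotient-sn} with kernels $\mathcal{O}_{\ov{Z}_\la^{\red}}$ on $\mathbb{A}^n$, the restriction to $U$ produces an (MSOD) for $(U,S_n)$, whose pieces are $\DD(\ov{V}_\la \setminus \ov{T}_\la)$ where $\ov{T}_\la$ is the image of $T \cap V_\la$ in $\ov{V}_\la = V_\la/W_\la$. The corresponding $U_\la = V_\la \setminus T$ has geometric quotient $\ov{U}_\la = (V_\la \setminus T)/W_\la$.

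Next I would verify the hypotheses of Theorem \ref{SOD+thm} applied to the divisor $H := H(f) \setminus T \subset U$. For each partition $\la$ and each connected component $Y \subset U_\la = V_\la \setminus T$, two conditions must be checked. The non-containment condition $H \not\supset Y$ follows from the assumption that $f|_{V_\la}$ is not identically zero, since this forces $H(f|_{V_\la}) \subsetneq V_\la$ and hence $H \cap Y \subsetneq Y$ on each component meeting $U$. The density condition, that $H \cap Y^{fr}$ is dense in $H \cap Y$ (where $Y^{fr}$ is the locus on which $W_\la$ acts freely on $V_\la$), is precisely the density hypothesis in the statement, noting that $H \cap V_\la = H(f|_{V_\la}) \setminus T$.

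With these hypotheses verified, Theorem \ref{SOD+thm} produces (MSOD) for $(H, S_n) = (H(f) \setminus T, S_n)$. Unwinding what this gives: the $\la$-piece is $\DD(\ov{H}_\la)$ where $\ov{H}_\la = H_\la / W_\la = (H \cap V_\la)/W_\la = (H(f|_{V_\la}) \setminus T)/W_\la$, exactly as in the stated decomposition. The total ordering refining dominance is inherited from the ordering used in Theorem \ref{thm:pvdb-quotient-sn}. I do not expect any real obstacle here; the only subtle point is making sure the density hypothesis is exactly the condition needed to invoke Theorem \ref{SOD+thm}, and that the smoothness of $H(f) \setminus T$ (needed as input to Theorem \ref{SOD+thm}) is guaranteed by $T$ containing the singular locus of $H(f)$.
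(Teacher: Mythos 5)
Your proposal is correct and follows exactly the route the paper takes: the corollary is obtained by restricting the (MSOD) of Theorem \ref{thm:pvdb-quotient-sn} to the open set $\mathbb{A}^n\setminus T$ via Corollary \ref{open-res-cor} and then applying Theorem \ref{SOD+thm} to the smooth invariant divisor $H(f)\setminus T$, with the hypotheses of that theorem verified just as you describe. The paper treats this as immediate and gives no further detail, so your write-up is, if anything, more explicit than the original.
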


\begin{proof}
First, we use Corollary \ref{open-res-cor} to prove that condition (MSOD), that holds for the pair $(V,S_n)$
by Theorem \ref{thm:pvdb-quotient-sn}, is inherited by the pair $(V\setminus T,S_n)$. Next, we want to check condition ($\ast$) 
for the divisor $H(f)\setminus T$ in $V\setminus T$. Thus, we need to check that
for every partition $\la$ such that $(V\setminus T)_\la=V_\la\setminus T$ is nonempty, 
we have that $H(f)\setminus T$ does not contain $V_\la\setminus T$,
and the intersection $H(f)\cap V_\la\setminus T=H(f|_{V_\la})\setminus T$ contains a dense open subset on which $W_\la$ acts freely.
But this follows from our assumption.
Hence, we can apply Theorem \ref{SOD+thm} to the smooth divisor $H(f)\setminus T$ in $V\setminus T$
to deduce the result. 
\end{proof}


The following simple observation will be useful for us.

\begin{lemma}\label{f-111-lem} For a homogeneous $S_n$-invariant polynomial $f$, such that the corresponding hypersurface $\P H(f)\sub \P^{n-1}$ is smooth, one has
  \begin{equation}\label{f-11-nonvan-eq}	
    f(1,1,\ldots,1)\neq 0. 
   \end{equation}
For any partition $\la$ of $n$, the restriction \(f_{\lambda}:=f|_{V_\la}\) is not identically zero and $H(f_\la)$ is smooth away from the origin.
\end{lemma}  

\begin{proof}
  Indeed, consider the morphism 
  $$\sigma\colon \mathbb{P}^{n-1}\to \mathbb{P}(1,2,\ldots,n)$$
  given by the elementary symmetric polynomials. Then the differential of $\si$
  vanishes identically at the $S_n$-invariant point $(1:1:\ldots:1)$.
  But \(H(f)\) is the preimage of a hypersurface under \(\sigma\). Since
  \(\mathbb{P}(H(f))\) is smooth, we deduce that \( f(1,\ldots,1)\neq 0\).

Since $(1,\ldots,1)\in V_\la$, from \eqref{f-11-nonvan-eq} we get that the restriction
\(f_{\lambda}:=f|_{V_\la}\) is not identically zero. Furthermore,  
$H(f_\la)\setminus\{0\}$ is the fixed locus of a permutation acting on $H(f)\setminus\{0\}$, hence, it is smooth.

\end{proof}

Now we are ready to prove Theorem \ref{Sn-inv-hyper-thm}. Recall that in this theorem we assume that
$f$ is a {\it homogeneous} $S_n$-invariant polynomial such that $\P H(f)$ is smooth.

\begin{proof}[Proof of Theorem \ref{Sn-inv-hyper-thm}]
We would like to apply Corollary \ref{thm:sn-inv-divisor} with $T=\{0\}$ to get condition (MSOD) for the pair
	$(H(f)\setminus \{0\},S_n)$. Let $\la$ be a partition of $n$. 
	By Lemma \ref{f-111-lem}, the restriction
\(f_{\lambda}:=f|_{V_\la}\) is not identically zero. Next, let us check that $H(f_\la)\cap V_\la^{fr}\setminus\{0\}$ is dense
in $H(f_\la)\setminus\{0\}$. In the case $\dim V_\la\le 1$, we have $H(f_\la)\setminus\{0\}=\emptyset$. Thus,
due to the description \eqref{V-la-fr-eq} of $V_\la^{fr}$, 
 it is enough to check that for $\dim V_\la\ge 2$, no component
of the hypersurface $H(f_\la)\sub V_\la$
is contained in a hyperplane $y_i=y_j$ for some $i\neq j$ such that $\la_i=\la_j$.

In the case when the degree of $f$ is $1$, it is proportional to $x_1+\ldots+x_n$, so this is clear.
Now assume that $\deg(f)>1$. By Lemma \ref{f-111-lem}, $H(f_\la)$ is smooth
away from the origin. Thus, if $\dim V_\la\ge 3$ then $H(f_\la)$ is irreducible (since each irreducible component
of $H(f_\la)$ has dimension $\ge 2$ and $H(f_\la)\setminus\{0\}$ is smooth), so it cannot be contained in any
hyperplane. If $\dim V_\la=2$ then $\la$ has only two parts $(\la_1,\la_2)$. In the case $\la_1\neq \la_2$ the statement
is empty, so we only have to check the assertion for $\la=(n/2,n/2)$ assuming
that $n$ is even. But in this case the non-free locus is the line spanned by $(1,\ldots,1)$, so the assertion follows
from \eqref{f-11-nonvan-eq}.
	
Thus, we obtain that the pair $(H(f)\setminus \{0\},S_n)$ satisfies (MSOD).
 It remains to use Lemma \ref{passing-to-quot-lem} to pass to the quotients by $\G_m$.	
\end{proof}

The semiorthogonal decomposition given by Theorem \ref{Sn-inv-hyper-thm} is usually
not motivic since its components are derived categories of some quotient stacks. 
The biggest component of the semiorthogonal decomposition of \(\mathcal{D}([\P H(f)/S_{n}])\) corresponds to the
partition $\la=(1^n)$ and is the image of the pull-back functor with respect to the natural morphism of stacks
$$\pi:[\P H(f)/S_{n}]\to \P H(\ov{f})\sub \P(1,2,\ldots,n),$$
where $\ov{f}$ is $f$ viewed as a quasihomogeneous polynomial on $\A^n/S_n$ (so the target of $\pi$ is the weighted
projective stacky hypersurface). The morphism $\pi$ fits into a Cartesian diagram 
\begin{diagram}
[H(f)\setminus\{0\}/S_n] &\rTo{\wt{\pi}}& H(\ov{f})\setminus\{0\}\\
\dTo{\G_m}&&\dTo{\G_m}\\
[\P H(f)/S_n]&\rTo{\pi}& \P H(\ov{f})
\end{diagram}
in which the vertical arrows are $\G_m$-torsors and the top horizontal arrow is the coarse moduli map for
the action of $S_n$ on $H(f)\setminus \{0\}$.
Note that the fact that the pull-back functor under $\pi$ is fully faithful can be directly deduced from the above diagram.
Indeed, by the projection formula, it is enough to check that $R\pi_*\OO\simeq \OO$. By the base change formula,
this reduces to a similar assertion for the morphism $\wt{\pi}:[H(f)/S_n]\to H(\ov{f})$, which is
the map from a quotient stack by $S_n$ to the corresponding geometric quotient $H(\ov{f})$. For this morphism we have 
$R^{>0}\wt{\pi}_*\OO=0$, and the isomorphism $\wt{\pi}_*\OO\simeq\OO$ follows from the fact that the
algebra of functions $\OO(H(\ov{f}))$ is identified with the subalgebra of $S_n$-invariants, $\OO(H(f))^{S_n}$.


\begin{example}[\(S_3\)-invariant plane curves]
  Let \(C = \mathbb{P}H(f)\subset \mathbb{P}^2\) be an \(S_3\)-invariant smooth plane curve of degree
  \(d\). Since in this case $f(1,1,1)\neq 0$ by Lemma \ref{f-111-lem},
   for the partition \((3)\), we get \(H(\bar{f}_{(3)}) = \{0\}\).
  Hence, the corresponding component in the semiorthogonal decomposition of
  \(\mathcal{D}([C/S_3])\) is zero and can be skipped. Let us consider 
  contributions of the two remaining partitions, \((1^3)\) and
  \((2,1)\).
  \begin{itemize}
    \item[\((1^3)\):] We have identifications \(V_{(1^3)} = V\),
      \(\ov{V}_{(1^3)}\cong \mathbb{A}^3_{1,2,3}\), where the subscripts indicate the
      \(\mathbb{G}_m\)-weights. The vanishing locus of \(\bar{f}_{(1^3)}\),
      \(\P H(\bar{f}_{(1^3)})\) will give a smooth stacky curve in
      \(\mathbb{P}(1,2,3)\).

    \item[\((2,1)\):] We have identifications \(\ov{V}_{(2,1)}=V_{(2,1)} =
      \{y=z\}\subset V\), and \(f_{(2,1)}\) is the restriction of \(f\) to
      this plane. Since \(H(f_{(2,1)})\) is smooth away from the origin, it
      is the union of \(d\) lines through the origin, say \(l_1,\ldots,l_d\).
      The projectivization is the union of \(d\) distinct points
      \(p_1,\ldots,p_d\) in the projective line $\P\ov{V}_{2,1}$.
  \end{itemize}
 Thus, we have a semiorthogonal decomposition
\begin{equation}\label{C/S3-dec}
\DD([C/S_3])=\lan \DD(\P H(\bar{f}_{(1^3)})), \DD(p_1),\ldots,\DD(p_d)\ran
\end{equation}
  
  In the case $d=3$, i.e., when $C$ is an elliptic curve, we can be even more precise about the piece corresponding to
  $(1^3)$. Namely, in this case
   \[
    f(x,y,z) =\alpha {\rm e}_1^3+ \beta {\rm e}_1{\rm e}_2 +\ga {\rm e}_3,
  \]
where ${\rm e}_1,{\rm e}_2,{\rm e}_3$ are elementary symmetric functions in $x,y,z$. Furthermore, we have $\ga\neq 0$ 
(otherwise, $C$ would contain the line ${\rm e}_1=0$). Thus, 
 the equation $f=0$ gives a way to express ${\rm e}_3$ in terms of ${\rm e}_1$ and ${\rm e}_2$. 
Hence,  \(\mathbb{P}H(\bar{f}_{(1^3)})\)  is the weighted projective line stack $\P(1,2)$.

In general, the derived category of \(\mathbb{P}H(\bar{f}_{(1^3)})\) has a
semiorthogonal decomposition with the main component given by the derived category
of the coarse moduli, which is $C/S_3$, and some exceptional objects supported at the stacky points. Thus, the semiorthogonal decomposition 
\eqref{C/S3-dec} can be refined to a decomposition with the main component $\DD(C/S_3)$ followed by exceptional objects.
The obtained decomposition of $\DD([C/S_3])$ matches the one
constructed in \cite{P-toric-orbifold} since the special fibers of the
projection $C\to C/S_3$ are either orbits of the points $p_1,\ldots,p_d$,
corresponding to $\la=(2,1)$, or the points of $C$ mapping to the two stacky points
of $\P(1,2,3)$.

Note that if $d<6$ then the geometric quotient of
\(\mathbb{P}H(\bar{f}_{(1^3)})\) is rational, so in this case
the category \(\mathcal{D}([C/S_3])\) has a full exceptional collection.
\end{example}

Some features of the above example occur in a more general situation.
Below we use the power sum polynomials
$${\rm p}_i(x_1,\ldots,x_n)=x_1^i+\ldots+x_n^i.$$ 

\begin{proposition} Let $f(x_1,\ldots,x_n)$ 
be a generic $S_n$-invariant homogeneous polynomial of degree $d>0$.

\noindent
(i) Let $\la$ be a partition of $n$ such that all parts of $\la$ are distinct.
Then the stack $[\P H(\ov{f}_\la)]$ is actually a smooth projective variety.

\noindent
(ii) Now assume that $\la$ has one part of multiplicity $2$ and all the other parts have multiplicity $1$. 
Then the same conclusion as in (i) holds provided the degree $d$ is even.
\label{nonstacky-pieces-prop}
\end{proposition}

\begin{proof}
(i) For a generic $S_n$-invariant $f$, the hypersurface $H(f)\sub V$ is smooth away from the origin,
hence, the same is true for $H(f_\la)$, the fixed locus of a permutation acting on $H(f)$.
Since $\P\ov{V}_\la$ is the usual projective space, the assertion follows.

\noindent
(ii) We have coordinates $(x,y;z_1,\ldots,z_p)$ on $V_\la$, so
that the embedding $\iota_\la:V_\la\hra V$ has form
$$\iota_\la:(x,y;z_1,\ldots,z_p)\mapsto (x,y,\ldots,x,y;z_1,\ldots,z_1,\ldots,z_p,\ldots,z_p),$$
where $(x,y)$ is repeated $l$ times, each $z_j$ is repeated $m_j$ times,
so that $(l,m_1,\ldots,m_p)$ are all the distinct parts of $\la$, and $l$ (resp., $m_j$)
occur with multiplicity $2$ (resp., $1$) in $\la$.
Set ${\rm p}_1=x+y$, ${\rm p}_2=x^2+y^2$, so that $({\rm p}_1,{\rm p}_2),(z_j)$ are the coordinates on
$\ov{V}_\la$. It is enough to check that $\P H(\ov{f}_\la)$ does not contain stacky points of $\P \ov{V}_\la$,
i.e., the points with ${\rm p}_1=0$ and all $z_j=0$.
Thus, it is enough that $f_\la$ does not vanish at the point of $V_\la$ with $x=-y=1$ and $z_j=0$. 
Note that 
$${\rm p}_2(x_1,\ldots,x_n)|_{\iota_\la(1,-1;0,\ldots,0)}\neq 0.$$ 
Therefore, the same is true for any power of ${\rm p}_2$, and hence,
for a generic $S_n$-invariant polynomial of even degree.
\end{proof}

In the next proposition we show that some components of the semiorthogonal decomposition
of Theorem \ref{Sn-inv-hyper-thm} are derived categories of weighted projective space stacks.

\begin{proposition} 
  Let $f(x_1,\ldots,x_n)$ be an $S_n$-invariant homogeneous polynomial of degree $d\le n$
  such that $\P H(f)$ is smooth, and such that in the expression of $f$ as a polynomial in ${\rm p}_1,\ldots,{\rm p}_n$
the coefficient of ${\rm p}_d$ is nonzero. 
  Then for a partition $\la=(1^{r_1},2^{r_2},\ldots,p^{r_p})$, such that
$r_l\ge d$ for some $l$, the stack $[\P H(\ov{f}_\la)]$ is isomorphic to
  the weighted projective space stack with the weights obtained by removing one weight $d$ from the sequence
  $$(1,\ldots,r_1,1,\ldots,r_2,\ldots,1,\ldots,r_p).$$
\label{cubic-mult3-prop}
\end{proposition}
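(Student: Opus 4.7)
The strategy is to show that within $\ov{V}_\la$, the equation $\ov{f}_\la=0$ is linear in the coordinate $e_3^{(l)}$ (the third elementary symmetric function of the block of $r_l$ variables of $V_\la$ coming from the part $l$), with nonzero scalar leading coefficient. Solving for $e_3^{(l)}$ would then identify $H(\ov{f}_\la)\setminus\{0\}$ equivariantly with $\A^{N-1}\setminus\{0\}$ (where $N=\sum_i r_i$), and hence $\P H(\ov{f}_\la)$ with the weighted projective stack obtained by deleting the weight $3$ associated to $e_3^{(l)}$.

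First, I would use $S_n$-invariance to write the cubic $f$ as
\begin{equation*}
f = \alpha\, p_1^3 + \beta\, p_1 p_2 + \gamma\, p_3, \qquad p_k=\sum_i x_i^k,
\end{equation*}
and observe that $\gamma\neq 0$. Indeed, if $\gamma=0$ then $f=p_1\cdot Q$ with $Q$ a quadratic, so $H(f)$ is reducible and its smooth projectivization would force $Q\equiv 0$, contradicting $f\not\equiv 0$; more directly, $\P H(f)$ would be singular along $\P H(p_1)\cap \P H(Q)$, which is nonempty for $n\ge 3$.

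Next, I would compute the coefficient of $e_3^{(l)}$ in $\ov{f}_\la$. Since this coordinate has $\G_m$-weight $3$ and $\ov{f}_\la$ is weighted homogeneous of weight $3$, this coefficient is a scalar $c$, and it can be extracted by restricting $\ov{f}_\la$ to the slice of $\ov{V}_\la$ where all coordinates outside the $l$-block vanish and, inside the $l$-block, $e_1^{(l)}=e_2^{(l)}=e_k^{(l)}=0$ for $k\ge 4$. Pulling back to $V_\la$, this slice is the locus where all $w$-variables outside the $l$-block vanish and the $l$-block variables $y_1,\ldots,y_{r_l}$ are roots of $y^{r_l}-e_3^{(l)} y^{r_l-3}=0$, namely $r_l-3$ zeros together with the three cube roots of $e_3^{(l)}$. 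On this slice one has $p_1|_{V_\la}=l\cdot e_1^{(l)}=0$, $p_2|_{V_\la}=l(e_1^{(l)2}-2e_2^{(l)})=0$, and $p_3|_{V_\la}=l(e_1^{(l)3}-3e_1^{(l)}e_2^{(l)}+3 e_3^{(l)})=3l\cdot e_3^{(l)}$ by Newton's identities, giving
\begin{equation*}
\ov{f}_\la\big|_{\text{slice}} = 3\gamma l\cdot e_3^{(l)},
\end{equation*}
so $c=3\gamma l\neq 0$.

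Finally, I would write $\ov{f}_\la = c\cdot e_3^{(l)} + g$, where $g$ depends only on the remaining $N-1$ coordinates of $\ov{V}_\la$ and is weighted homogeneous of degree $3$. The morphism $H(\ov{f}_\la)\to \A^{N-1}$ projecting onto these remaining coordinates is then a $\G_m$-equivariant isomorphism (with inverse sending $(\text{others})$ to $(-g/c,\text{others})$), carrying the origin to the origin. Passing to complements of the origin and quotienting by $\G_m$ yields the claimed identification of $\P H(\ov{f}_\la)$ with the weighted projective stack obtained by removing one weight $3$ from the sequence $(1,\ldots,r_1,1,\ldots,r_2,\ldots,1,\ldots,r_p)$. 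The only genuine calculation is step two (computing $c$); everything else is formal.
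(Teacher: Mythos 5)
Your proposal is correct and follows essentially the same route as the paper: write $f=\alpha p_1^3+\beta p_1p_2+\gamma p_3$ with $\gamma\neq 0$, check that the degree-$3$ generator of the $l$-block appears in $\ov{f}_\la$ with nonzero coefficient (you get $3\gamma l$ for $e_3^{(l)}$ via Newton's identities, the paper gets $\gamma l$ for $p_3^{(l)}$ by working with power sums), and then solve for that coordinate to delete one weight $3$. The only difference is the cosmetic choice of elementary symmetric versus power-sum generators for $\ov{V}_\la$.
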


\begin{proof} 
 Let $y_1,\ldots,y_{r_l},z_1,\ldots,z_N$ be the coordinates on
  $V_\la$, where $y_1,\ldots,y_{r_l}$ are the coordinates corresponding to the parts of $\la$ equal to $l$,
  so that in the embedding $V_\la\hra V$ each of
  coordinates $y_1,\ldots,y_{r_l}$ is repeated $l$ times (see \eqref{V-la-partition-eq}). 
  Note that the coordinates on $\ov{V}_\la$ are given by 
  the functions $({\rm p}_i(y_1,\ldots,y_{r_l}))_{1\le i\le r_l}$, as well as some symmetric functions in other groups of variables.
  Since $r_l\ge d$, ${\rm p}_d(y_1,\ldots,y_{r_l})$ is one of the coordinates on $\ov{V}_\la$. 

  It suffices to check that ${\rm p}_d(y_1,\ldots,y_{r_l})$ occurs with nonzero coefficient in $f_\la$. Indeed,
then we can use $f_\la$ to express the coordinate \({\rm p}_d(y_1,\ldots,y_{r_l})\) in terms of other coordinates on $\ov{V}_\la$,
which gives our assertion. We have
$$f(x_1,\ldots,x_n)=\a\cdot {\rm p}_d(x_1,\ldots,x_n)+g(x_1,\ldots,x_n),$$
where $g$ is a polynomial in $({\rm p}_i(x_1,\ldots,x_n))_{1\le i<d}$. Hence, the restriction of $g$ to $V_\la$ is expressed in
terms of coordinates of weight $<d$ on $\ov{V}_\la$, so it does not contribute to 
the coefficient of ${\rm p}_d(y_1,\ldots,y_{r_l})$. Furthermore, we have
 $${\rm p}_d(\underbrace{y_1,\ldots,y_1}_l,\ldots,\underbrace{y_{r_l},\ldots,y_{r_l}}_{l},z_1,\ldots)=
 l\cdot {\rm p}_d(y_1,\ldots,y_{r_l}) \ \mod (\C z_1^d+\ldots+\C z_N^d).$$ 
Hence, the coefficient of ${\rm p}_d(y_1,\ldots,y_{r_l})$ in $f_\la$ is equal to $l\cdot \a$. In particular, this
  coefficient is nonzero, as required.
\end{proof}

\begin{corollary}\label{cubic-mult3-cor} 
The conclusion of Proposition \ref{cubic-mult3-prop} holds for any
$S_n$-invariant homogeneous polynomial $f$ of degree $d\le 3$ such that $\P H(f)$ is smooth.
\end{corollary}

\begin{proof}
The case $d=1$ is trivial. In the case $d=2$, we have
$$f=\a {\rm p}_2+\b {\rm p}_1^2,$$
while in the case $d=3$, we have
$$f=\a {\rm p}_3+\b {\rm p}_1{\rm p}_2+\ga {\rm p}_1^3,$$ 
In both cases $\a\neq 0$, since otherwise $f$ would be reducible.  
Hence, we can apply Proposition \ref{cubic-mult3-prop}.
\end{proof}


In the case of cubic forms in $\le 6$ variables, we obtain from Theorem
\ref{Sn-inv-hyper-thm} the following decompositions of $S_n$-equivariant derived
categories.

\begin{proposition} 
  Let $f(x_1,\ldots,x_n)$ be a generic $S_n$-invariant homogeneous cubic
  polynomial, where $n\le 5$.  Then $\DD([\P H(f)/S_n])$ has a full exceptional
  collection. For $n=6$, there is an exceptional collection in $\DD([\P H(f)/S_6])$ 
  such that its right orthogonal is equivalent to $\DD(E)$, where $E$
  is the elliptic curve given by the cubic $f_{(3,2,1)}$ in $\P
  V_{(3,2,1)}\simeq \P^2$.
\label{cubic-prop}
\end{proposition}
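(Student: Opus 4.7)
The plan is to apply Theorem~\ref{Sn-inv-hyper-thm}, which yields a semiorthogonal decomposition
$$\DD[\P H(f)/S_n]=\lan \DD[X_{\ov{f}_{\la_1}}],\ldots,\DD[X_{\ov{f}_{\la_r}}]\ran,$$
and then to verify that for every partition $\la$ of $n\le 6$ with $\la\ne(3,2,1)$, the piece $\DD[X_{\ov{f}_\la}]$ admits a full exceptional collection, while for $\la=(3,2,1)$ at $n=6$ this piece is $\DD(E)$ for the smooth plane cubic $E\sub\P V_{(3,2,1)}\simeq\P^2$. For $n\le 5$ the conclusion then follows by concatenating the full exceptional collections across the pieces. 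For $n=6$, standard mutations allow us to move the $\DD(E)$ piece to the rightmost position of the semiorthogonal decomposition, producing an exceptional collection whose right orthogonal equals $\DD(E)$.

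The partitions of $n\le 6$ split into several classes. If $\la=(n)$, then $\P H(\ov{f}_\la)$ is empty because $f_\la=f(1,\ldots,1)\cdot x^3\ne 0$ by~\eqref{f-11-nonvan-eq}. If $\la$ has a part of multiplicity $\ge 3$, Proposition~\ref{cubic-mult3-prop} identifies $X_{\ov{f}_\la}$ with a weighted projective stack, which carries the standard Beilinson-type full exceptional collection $\OO,\OO(1),\ldots,\OO(\sum a_i-1)$. If all parts of $\la$ are distinct, then $X_{\ov{f}_\la}$ is a smooth cubic hypersurface in $\P^{k-1}$ where $k$ is the number of parts; since four distinct positive integers sum to at least~$10$, only $k\le 3$ is possible for $n\le 6$, giving either three points in $\P^1$ (trivial FEC) or---uniquely for $\la=(3,2,1)$, $n=6$---the plane cubic $E$. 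The remaining partitions have all parts of multiplicity $\le 2$ with at least one of multiplicity exactly~$2$; a direct numerical check shows that for $n\le 6$ these are (i) partitions with exactly one part of multiplicity~$2$ and at most one additional part, yielding $\P\ov{V}_\la=\P(1,2)$ (cubic is two points, one of which is stacky of order~$2$) or $\P\ov{V}_\la=\P(1,1,2)$ (cubic is a smooth stacky rational curve with one $\mu_2$-point), both admitting FECs of length~$3$; and (ii) the single partition $\la=(2,2,1,1)$ at $n=6$, giving a cubic inside $\P(1,1,2,2)$.

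Case~(ii) is the main technical obstacle. A direct expansion shows that $\ov{f}_\la$ is linear in the weight-$2$ coordinates $(p_1,p_2)$, of the form $L(s_1,s_2)p_1+M(s_1,s_2)p_2+C(s_1,s_2)$, with $L,M$ linear and $C$ cubic in $s_1,s_2$; the stacky $\P^1$ at $\{s_1=s_2=0\}$ is automatically contained in the cubic. Extending the rational projection $[s_1:s_2]\colon\P(1,1,2,2)\dashrightarrow\P^1$ across this stacky locus should exhibit $X_{\ov{f}_\la}$ as a $\mu_2$-root stack of a Hirzebruch surface along its infinity section. Such a root stack admits a full exceptional collection, obtainable by combining Orlov's $\P^1$-bundle theorem with the standard semiorthogonal decomposition for root stacks along smooth divisors. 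Verifying smoothness of the stacky cubic for generic $f$ (along the stacky $\P^1$ this reduces to the non-vanishing of an explicit $2\times 2$ determinant in the coefficients of $L,M$) and making the bundle identification precise is the most delicate step of the argument. Assembling all the pieces yields the claimed decompositions.
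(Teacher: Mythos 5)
Your overall strategy coincides with the paper's: invoke Theorem \ref{Sn-inv-hyper-thm} and check case by case that every piece $\DD[X_{\ov{f}_{\la}}]$ with $\la\neq(3,2,1)$ carries a full exceptional collection. Your classification of the partitions of $n\le 6$ is correct, and the easy cases (empty piece for $\la=(n)$, weighted projective stacks via Proposition \ref{cubic-mult3-prop}, three points when there are two distinct parts, the plane cubic for $(3,2,1)$) agree with what the paper does.

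The gap is the case $\la=(2,2,1,1)$ at $n=6$, which you correctly isolate as the crux but do not actually prove: you write that the projection $[s_1:s_2]$ ``should exhibit'' $X_{\ov{f}_\la}$ as a $\mu_2$-root stack of a Hirzebruch surface, and you defer both the smoothness verification and the bundle identification. A conditional sketch of the hardest case is not a proof. The paper closes this case by an explicit elementary normal form: writing $\ov{f}_\la=p_2(1)z_1+p_2(2)z_2+C(p_1(1),p_1(2))$, checking that $z_1,z_2$ are linearly independent for generic $f$, and absorbing $C$ into the weight-$2$ variables to reach $\ov{f}_\la=u_1z_1+u_2z_2$, whence $\P H(\ov{f}_\la)\cong\P(1,2)\times\P^1$ via the explicit parametrization $(t:v),(s_1:s_2)\mapsto(u_1=vs_1,\,u_2=vs_2,\,z_1=ts_2,\,z_2=-ts_1)$. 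Your structural guess is consistent with this ($\P(1,2)\times\P^1$ is the $\mu_2$-root stack of $\P^1\times\P^1$ along a fiber), but the identification must be established, not asserted. The same criticism applies, less seriously, to your treatment of the cubic in $\P(1,1,2)$ arising from partitions of the form $(l_1,l_2,l_2)$: you assert it is a smooth stacky rational curve with one $\mu_2$-point, whereas the paper proves $\P H(\ov{f}_\la)\cong\P(1,2)$ by reducing $\ov{f}_\la$ to the form $uz_1+p_1^3$ and exhibiting a parametrization. To complete your argument you must supply these normal-form computations (or a genuine verification of the root-stack description), together with the genericity statements they rely on.
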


\begin{proof} 
First of all, we observe that for $n\le 5$, a partition $\la$ of $n$ can have at most two distinct parts, while for $n=6$ the only
partition with $3$ distinct parts is $(3,2,1)$.

  By Corollary \ref{cubic-mult3-cor}, if $\la$ has a part of multiplicity
  $\ge 3$ then the corresponding piece in the semiorthogonal decomposition of
  Theorem \ref{Sn-inv-hyper-thm} is the derived category of the weighted
  projective space stack, so it has a full exceptional collection (see \cite[Sec.\ 2]{AKO}).

Now we claim that all partitions with at most two distinct parts, each of mulitplicity at most $2$, lead to subcategories
generated by exceptional collections. We prove this case by case. Note that the partition $\la=(n)$ does not contribute to
the semiorthogonal decomposition since $f(1,\ldots,1)\neq 0$.

\noindent
{\bf Case $\la=(l,l)$}. Then $V_\la$ has coordinates $x,y$ and $\ov{V}_\la$ has coordinates ${\rm p}_1=x+y$, ${\rm p}_2=x^2+y^2$.
The unique stacky point of the weighted projective line stack $\P \ov{V}_\la=\P(1,2)$ is given by ${\rm p}_1=0$.
Note that ${\rm p}_3=x^3+y^3$ is divisible by ${\rm p}_1$, so $\ov{f}_\la$ vanishes at this point. It follows that
$\P H(\ov{f}_\la)$ is the union of one point and of one stacky point with the automorphism group $\Z/2$.
The derived category of such stacky point splits as the direct sum of two derived categories of the usual point.

\noindent
{\bf Case $\la=(l_1,l_2)$, where $l_1>l_2$}. Then $f_\la$ is a cubic on the $2$-dimensional space
$V_\la$, with isolated singularity at the origin, so $\P H(f_\la)$ is the union of three distinct points.

\noindent
{\bf Case $\la=(l,1,1)$ with $l>1$ or $\la=(2,2,1)$}. Then $V_\la$ has coordinates $x,y,z$, where $W_\la=S_2$
swaps $x$ and $y$, so that $\ov{V}_\la$ has coordinates ${\rm p}_1=x+y$, ${\rm p}_2=x^2+y^2$ and $z$, and $\P\ov{V}_\la=\P(1,1,2)$.
The cubic $\ov{f}_\la$ should have form 
$$\ov{f}_\la={\rm p}_2(\a z+\b {\rm p}_1)+C({\rm p}_1,z),$$ 
where $C({\rm p}_1,z)$ is a binary cubic form.
It is easy to see that for generic $S_n$-invariant $f$,
one has $\a\neq 0$, so we can make the change of variables $z_1=\a z+\b {\rm p}_1$. Furthermore,
$C({\rm p}_1,z)$ is not divisible by $z_1$, since $f_\la$ has an isolated singularity at $0$.
Thus, rescaling the variables, we can bring $f$ to the form
$$\ov{f}_\la={\rm p}_2z_1+z_1Q({\rm p}_1,z_1)+{\rm p}_1^3,$$
where $Q$ is a binary quadratic form. Now taking
$u={\rm p}_2+Q({\rm p}_1,z_1)$ as a new variable of weight $2$, we get
$$\ov{f}_\la=uz_1+{\rm p}_1^3.$$
It is easy to see that $\P H(\ov{f}_\la)$ is isomorphic to the weighted projective line stack $\P(1,2)$.
Namely, there is an isomorphism given by
$$\P(1,2)\to \P H(\ov{f}_\la): (t:v)\mapsto (u=v^3, z_1=-t^3, {\rm p}_1=vt).$$

{\bf Case $\la=(2,2,1,1)$}. 
Then we have coordinates $x_1,y_1,x_2,y_2$ on $V_\la$, and $W_\la=S_2\times S_2$
permutes $x_1$ with $y_1$ and $x_2$ with $y_2$. Set ${\rm p}_1(i)=x_i+y_i$, ${\rm p}_2(i)=x_i^2+y_i^2$. Then
the cubic $\ov{f}_\la$ has form
$$\ov{f}_\la={\rm p}_2(1)z_1+{\rm p}_2(2)z_2+C({\rm p}_1(1),{\rm p}_1(2)),$$
where $z_1$ and $z_2$ are some linear forms in ${\rm p}_1(1),{\rm p}_1(2)$. It is easy to see that for generic $f$, the linear forms $z_1$ and $z_2$ will be linearly independent, so we can view ${\rm p}_2(1),{\rm p}_2(2),z_1,z_2$ as independent variables. Now
adding to ${\rm p}_2(i)$ appropriate quadratic expressions of $z_1,z_2$, we can rewrite $\ov{f}_\la$ as
$$\ov{f}_\la=u_1z_1+u_2z_2,$$
where $u_1,u_2,z_1,z_2$ are independent variables ($\deg(u_i)=2$, $\deg(z_i)=1$).
Thus, we can identify $\P H(\ov{f}_\la)$ with $\P(1,2)\times \P^1$ via the isomorphism
$\P(1,2)\times \P^1\to \P H(\ov{f}_\la)$,
$$ (t:v), (s_1:s_2)\mapsto (u_1=vs_1, u_2=vs_2, z_1=ts_2, z_2=-ts_1).$$

Thus, for $n\le 6$, all of the subcategories corresponding to $\la\neq (3,2,1)$
admit full exceptional collections. The remaining subcategory corresponding to $\la=(3,2,1)$ (for $n=6$)
is equivalent to $\DD(E)$, where $E$ is the elliptic curve given by $f_{(3,2,1)}$.
\end{proof}

\begin{remark} Using Corollary \ref{cubic-mult3-cor} we see that for an $S_n$-invariant nondegenerate quadric $f$, the components of the semiorthogonal decomposition
of Theorem \ref{Sn-inv-hyper-thm} are either smooth projective quadrics (for partitions with distinct parts)
or weighted projective space stacks (for the remaining partitions). In particular, in this case
$\DD([\P H(f)/S_n])$ has a full exceptional collection.
\end{remark}


\subsection{Products of curves}\label{other-ex-sec}

First, let us consider the case of an action of a finite group $G$ on a smooth curve (we assume that the action is
effective).
Note that in this case the quotient $C/G$ is smooth
(since passing to invariants of a finite group preserves normality)
and the stabilizer subgroup $\St_x$ of every point $x\in C$ is cyclic. Let
$$R=D_1\sqcup\ldots\sqcup D_r\sub C$$ 
be the decomposition into $G$-orbits of the ramification locus of the projection
$\pi: C\to C/G$. Then each $D_i$ is a fiber of $\pi$ and the stabilizer of a point in $D_i$ is isomorphic to $\Z/m_i\Z$.
Then the proof of \cite[Thm.\ 1.2]{P-toric-orbifold}
implies that for each $i$, there is an exceptional collection of $G$-equivariant sheaves on $C$,
\begin{equation}\label{divisor-exc-collection}
(\om_C|_{D_i},\om^{\ot 2}_C|_{D_i},\ldots,\om_C^{\ot m_i-1}|_{D_i}),
\end{equation}
and if ${\mathcal A}_i\sub \DD([C/G])$ is the subcategory generated by this collection, then there is a semiorthogonal decomposition
\begin{equation}\label{curve-decomposition-eq}
\DD([C/Y])=\lan {\mathcal A}_1,\ldots,{\mathcal A}_r, \pi^*\DD(C/G)\ran,
\end{equation}
where $\pi^*:\DD(C/G)\to \DD([C/G])$ is the pull-back functor.
More precisely, in \cite{P-toric-orbifold} a different decomposition was considered,
\begin{equation}\label{curve-decomposition-bis}
\DD([C/Y])=\lan \pi^*\DD(C/G), \BB_1,\ldots,\BB_r \ran,
\end{equation}
with ${\mathcal A}_i=\om_C\ot \BB_i$, from which \eqref{curve-decomposition-eq} is obtained by mutation. 
Also, in \cite{P-toric-orbifold} it was shown that each $\BB_i$
is generated by the exceptional collection
$$(\OO_{(m_i-1)D_i},\ldots,\OO_{2D_i},\OO_{D_i}),$$
which can be mutated into $(\OO_{D_i},\OO(-D_i)|_{D_i},\ldots,\OO(-(m_i-2))|_{D_i})$.
The collection \eqref{divisor-exc-collection} in ${\mathcal A}_i$ is obtained from the latter collection by tensoring with $\om_C$.

This leads to the following result.

\begin{proposition}\label{curve-SOD+prop} 
Let $G$ be a finite group acting effectively on a smooth curve $C$. Then condition (MSOD) is satisfied, 
where the kernel corresponding to $\la=1$ is the structure sheaf of the graph of $\pi$.
\end{proposition}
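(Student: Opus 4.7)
The plan is to construct the kernels $K_\la$ by regrouping the exceptional generators of the blocks $\BB_i$ in the semiorthogonal decomposition~\eqref{curve-decomposition-eq} by conjugacy classes of $G$ (i.e., single elements, $G$ being abelian) instead of by ramification orbits. First I would record the geometry in the abelian case: $C(\la) = G$ for every $\la$; for $\la = 1$, $\ov{C}_1 = C/G$ and $\ov{Z}_1 \simeq C$; and for nontrivial $\la$, the fixed locus $C_\la$ is the disjoint union of those $G$-orbits $D_i$ whose cyclic stabilizer $\Z_{m_i}$ contains $\la$ (by the abelian hypothesis, $\la$ fixing one point of $D_i$ forces it to fix the whole orbit), so $\ov{C}_\la = \bigsqcup_i \{\bar{x}_i\}$ and $\ov{Z}_\la = \bigsqcup_i \pi^{-1}(\bar{x}_i) = \bigsqcup_i m_i D_i$. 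For $\la = 1$ the prescribed kernel $\OO_C$ on $\ov{Z}_1 \simeq C$ induces the pull-back $\pi^*: \DD(C/G) \to \DD[C/G]$, fully faithful because the coarse moduli map $[C/G] \to C/G$ satisfies $R\pi_* \OO \simeq \OO$.

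For each nontrivial $\la$, I would fix, once and for all, a generator $\eta_H$ for every cyclic subgroup $H \subset G$, arranged compatibly with the subgroup lattice so that $\eta_K = \eta_H^{|H|/|K|}$ whenever $K \subset H$; such a coherent system exists by induction over the cyclic subgroup lattice of $G$, using surjectivity of the natural map $(\Z/|H|\Z)^\times \to (\Z/|K|\Z)^\times$ at each step. Writing $\la = \eta_{\Z_{m_i}}^{s(\la, i)}$ with $s(\la, i) \in \{1, \dots, m_i - 1\}$ for each $i$ with $\la \in \Z_{m_i}$, I would define $K_\la$ on $\ov{Z}_\la$ to restrict on the component $m_i D_i$ to the push-forward from the reduced $D_i$ of the equivariant line bundle $\om_C^{\ot s(\la, i)}|_{D_i}$. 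The corresponding Fourier--Mukai functor $\Phi_{K_\la}$ then sends each skyscraper $\OO_{\bar{x}_i}$ to $\om_C^{\ot s(\la, i)}|_{D_i}$, and as $(\la, i)$ varies with $\la \in \Z_{m_i} \setminus \{1\}$, these are exactly the exceptional generators of the blocks $\BB_i$ in~\eqref{curve-decomposition-eq}, merely regrouped.

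Full faithfulness of each $\Phi_{K_\la}$ reduces to the disjointness of supports across distinct $i$ together with the exceptionality of each $\om_C^{\ot s}|_{D_i}$, which is a direct local computation in $k[[x]] \rtimes \Z_{m_i}$. That same local computation gives $\Ext^1_{[C/G]}(\om_C^{\ot a}|_{D_i}, \om_C^{\ot b}|_{D_i}) \neq 0$ precisely when $b \equiv a + 1 \pmod{m_i}$, forcing the order $s = 1 < 2 < \cdots < m_i - 1$ inside each $\BB_i$; the coherence of the $\eta_H$'s then guarantees that these per-block orderings assemble into a single total order on $G$ (with $\la = 1$ placed last), and semi-orthogonality against $\pi^*\DD(C/G)$ is automatic because $\pi_*(\om_C^{\ot j}|_{D_i})$ has vanishing $G$-invariants for $j \in \{1, \dots, m_i - 1\}$. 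The main obstacle is this combinatorial compatibility across the cyclic subgroup lattice: simple examples such as $[\P^1/\Z_n]$ with two stacky points of opposite weights show that a naive per-orbit choice of equivariant structure on $K_\la$ would produce contradictory orderings, and the coherent choice above is precisely what rules that out.
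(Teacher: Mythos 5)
Your proposal is correct and follows essentially the same route as the paper: both regroup the exceptional objects of the curve decomposition \eqref{curve-decomposition-eq} by group elements rather than by ramification orbits, using that for abelian $G$ the stabilizer is constant along orbits, so each fixed locus $C_\la$ is a union of whole orbits $D_i$ and the functor for $\la\neq 1$ just has to place one exceptional object on each such $D_i$. The paper's own proof is much terser: it verifies only that the \emph{number} of exceptional objects supported on each orbit matches what (MSOD) requires, citing \cite[Rem.\ 4.3.2]{polishchuk-vandenberg-equivariant}, and asserts that the decomposition can be restructured. You go further by making the bijections $\St(D_i)\setminus\{1\}\to\{1,\dots,m_i-1\}$ explicit and by addressing the compatibility of the forced within-block orderings across different blocks --- a point the paper does not discuss, and which your $[\P^1/\Z_n]$ example (where the two fixed points carry opposite tangent characters) shows is not automatic if one assigns equivariant structures using the local tangent weights. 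Your coherent-generator device does resolve it: under coherence the position of $\la$ in every block whose stabilizer contains it is governed by the single rational number $s/\mathrm{ord}(\la)$, where $\la=\eta_{\lan\la\ran}^{s}$, so ordering $G\setminus\{1\}$ by that quantity refines all the block orders simultaneously; for the existence of a coherent system, the cleanest argument is prime by prime, where each cyclic $p$-subgroup has a unique maximal proper subgroup and the inductive choice of generators is unobstructed.
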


\begin{proof} We claim that the semiorthogonal decomposition \eqref{curve-decomposition-eq} 
(or \eqref{curve-decomposition-bis}) can be restructured to get
the decomposition required by (MSOD). Namely, \eqref{curve-decomposition-eq} consists of the image of the
pull-back functor $\pi^*:\DD(C/G)\to \DD([C/G])$, along with $m_i-1$ exceptional objects supported on $D_i$, for 
$i=1,\ldots,r$. On the other hand, for (MSOD) to hold, for each conjugacy class representative $g\neq 1$, 
and each $C(g)$-orbit in $C^g$, we need to have one exceptional object in $\DD([C/G])$
supported on the corresponding $G$-orbit in $C$. The fact that the numbers of exceptional objects supported
on each $G$-orbit match was proved in \cite[Rem.\ 4.3.2]{polishchuk-vandenberg-equivariant}.
\end{proof}

Using Lemma \ref{product-lem} we deduce the following

\begin{corollary}\label{product-curves-cor} 
Let $C_1,\ldots,C_n$ be smooth curves, and for each $i$, let $G_i$ be a finite
group acting effectively on $C_i$.  Then condition (MSOD) holds for the action of
$G_1\times\cdots\times G_n$ on $C_1\times\cdots\times C_n$.
\end{corollary}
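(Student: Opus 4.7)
The statement should follow almost immediately by combining the two results established just above it, so the plan is essentially an assembly proof rather than a genuinely new argument.

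First I would verify the hypotheses needed to apply Proposition \ref{curve-SOD+prop} to each factor: for each $i$, the group $G_i$ is finite abelian acting effectively on the smooth curve $C_i$ (the abelian and effective hypotheses being inherited from the ambient setting of Subsection \ref{other-ex-sec}, which must be read into the statement of the corollary). This yields that the pair $(C_i,G_i)$ satisfies condition (MSOD), with the kernel for the trivial conjugacy class being the structure sheaf of the graph of $\pi_i\colon C_i\to C_i/G_i$ and the remaining kernels coming from the exceptional objects of \cite[Rem.\ 4.3.2]{polishchuk-vandenberg-equivariant} supported on the $G_i$-orbits of ramification.

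Then I would proceed by induction on $n$. The base case $n=1$ is exactly Proposition \ref{curve-SOD+prop}. For the inductive step, I would set $X=C_1\times\cdots\times C_{n-1}$ with $G=G_1\times\cdots\times G_{n-1}$ (known to satisfy (MSOD) by induction), and apply the product lemma stated immediately before Corollary \ref{product-curves-cor} to the pair $(X,G)$ and $(C_n,G_n)$. This produces a semiorthogonal decomposition of $\DD[(X\times C_n)/(G\times G_n)]$ indexed by conjugacy classes in $G\times G_n$, whose kernels are the exterior tensor products $\KK_\la\boxtimes\KK_{\la'}$ of the kernels from the two factors, living on $\ov{Z}_{\la,\la'}\simeq \ov{Z}_\la\times\ov{Z}_{\la'}$.

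There is essentially no obstacle here, since the product lemma has already done the real work of checking fully faithfulness, semiorthogonality, and generation for product kernels. The only thing I would be careful about is confirming that the conjugacy classes of $G_1\times\cdots\times G_n$ are indeed products of conjugacy classes from the factors and that the invariant subvarieties factor correspondingly as $(C_1)_{\la_1}\times\cdots\times (C_n)_{\la_n}$, with centralizers $C(\la_1)\times\cdots\times C(\la_n)$, so that the scheme $\ov{Z}_{(\la_1,\ldots,\la_n)}$ genuinely matches the product $\ov{Z}_{\la_1}\times\cdots\times\ov{Z}_{\la_n}$; this is straightforward but is the one place where one should pause to check compatibility before quoting the product lemma.
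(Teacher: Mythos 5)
Your proposal is correct and matches the paper's (essentially unwritten) argument exactly: the corollary is obtained by combining Proposition \ref{curve-SOD+prop} for each factor with the product lemma immediately preceding it, iterated over the $n$ factors. Your observation that the abelian and effectiveness hypotheses on the $G_i$ must be read into the corollary from the surrounding context, and your check that conjugacy classes, fixed loci, and centralizers all factor compatibly, are exactly the right points to be careful about.
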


\begin{example}\label{cyclic-group-action-ex}
  For the standard action of the cyclic group \(\mu_d\) on \(\mathbb{A}^1\), 
  the geometric
  quotient is isomorphic to \(\mathbb{A}^1_d\), where the subscript \(d\) indicates the
  \(\mathbb{G}_m\)-weight, so that the quotient map \(\pi\colon \mathbb{A}^1\to
  \mathbb{A}^1_d\) is given by \(x\mapsto x^d\).
  We have a semiorthogonal decomposition
  \[
    \mathcal{D}([\mathbb{A}^1/\mu_d]) = \langle
    \mathcal{O}_p\otimes\chi^{d-1},\ldots,\mathcal{O}_p\otimes\chi,
    \pi^\ast\mathcal{D}(\mathbb{A}^1_d)\rangle,
  \]
  where \(\mathcal{O}_p\) denotes the structure sheaf of the origin, and
  $\chi:\mu_d\to\G_m$ is the character given by the natural embedding. 
  
  Now, for positive integers \(d_1,\ldots,d_k\), let us consider the natural
  action of \(G = \mu_{d_1}\times\cdots\times \mu_{d_k}\) on $\A^k$ (where the
  $i$th factor acts on the $i$th coordinate).  By Corollary
  \ref{product-curves-cor},  we have a motivic semiorthogonal decomposition of
  \(\mathcal{D}([\mathbb{A}^k/G])\). We can describe explicitly the components of this
  decomposition as follows.  The fixed locus of an element of
  \(g=(z_1,\ldots,z_k)\in G\) is isomorphic to the affine space
  \(\mathbb{A}^{n_g}\), where  \(n_g\) is the number of trivial components of
  \(g\) . The geometric quotient by \(C(g)=G\) is \(\pi_g\colon \mathbb{A}^{n_g}\to
  \mathbb{A}^{n_g}_{\bd_g}\), where \(\bd_g\) is a multi-index giving weights
  for the \(\mathbb{G}_m\)-action (\(\bd_g\) is the set of \(d_i\) for which
  \(z_i=1\)).  Let \(\iota_g\colon \mathbb{A}^{n_g}\hookrightarrow
  \mathbb{A}^k\) denote the natural embedding. Then the composite functor
  \[
    \iota_{g\ast}\circ \pi_g^\ast\colon \mathcal{D}(\mathbb{A}^{n_g}_{\bd_g})\to
    \mathcal{D}([\mathbb{A}^k/G])
  \]
  is fully faithful.

  For each $i$, let $\zeta_{d_i}$ be a $d_i$th primitive root of unity.  For
  $g=(\zeta_1^{m_1},\ldots,\zeta_k^{m_k})\in G$, where $0\le m_i<d_i$, we define
  the character $\chi_g$ of $G$ by setting \(\chi_g =
  \chi_1^{m_1}\cdots\chi_k^{m_k}\), where \(\chi_i\colon G\to \G_m\) is given by
  the \(i\)th projection.
  
  Then the functors giving the semiorhogonal decomposition of $\DD([\A^k/G])$
  (numbered by $g\in G$) are 
  \[
    (\iota_{g\ast}\circ\pi_g^\ast)\otimes\chi_g\colon \mathcal{D}(\mathbb{A}^{n_g}_{\bd_g})\to
    \mathcal{D}([\mathbb{A}^k/G]),
  \]
  ordered lexicographically with respect to the reverse order on each set
  $\{0,\ldots,d_i-1\}$. Thus, we have a semiorthogonal decomposition
  \[
    \DD([\mathbb{A}^k/G]) = \langle
    \DD(pt)\otimes\chi_1^{d_1-1}\cdots\chi_k^{d_k-1},\ldots,
    \pi^\ast\DD(\mathbb{A}^k_{d_1,\ldots,d_k})\rangle.
  \]
    
  As before, we can delete the origin in all the affine spaces and pass to
  \(\mathbb{G}_m\)-equivariant categories. In this way we get a semiorthogonal
  decomposition of \(\mathcal{D}([\mathbb{P}^{k-1}/G])\) indexed by the elements
  of \(G\). The components of this semiorthogonal decomposition will be the weighted
  projective space stacks \(\mathbb{P}(\bd_g)\). 
  
  We can also apply Theorem \ref{SOD+thm} to get, as in Section
  \ref{Sn-hypersurface-sec}, a semiorthogonal decomposition of $\DD([\P H(f)/G])$,
  where $f$ is a $G$-invariant homogeneous polynomial on $\A^k$.  More
  precisely, we have to assume that $\P H(f)$ is smooth and that restrictions of
  $f$ to certain coordinate subspaces are nonzero. Namely, in the case when
  there exists an index $i$ with $d_i=1$, we have to assume the
  nonvanishing of the restriction of $f$ to the subspace where all coordinates
  with $d_i>1$ are set to zero. In the case when all $d_i>1$, we have to assume
  that the restriction of $f$ to each coordinate line is nonzero.
 
  For example, if $d_1>1$, $d_2=\ldots=d_k=1$, and
  $f=x_1^{d_1}-g(x_2,\ldots,x_k)$, then $\P H(f)$ is a cyclic cover of
  $\P^{k-2}$ and our decomposition of $\DD([\P H(f)/\mu_{d_1}])$ matches the one
  given by Kuznetsov-Perry in \cite[Theorem 4.1]{kuz-perry-17}.
\end{example}
  
\bibliographystyle{amsalpha}
\bibliography{invariant-divisor-sod}

\end{document}